\begin{document}

\title{{A concentration phenomenon for $h$-extra edge-connectivity  reliability analysis of  enhanced hypercubes $Q_{n,2}$ with exponentially many faulty links}\thanks{This work was supported by the Basic scientific research in universities of Xinjiang Uygur Autonomous Region (Grant No. XJEDU2024P012), the National Natural Science Foundation of China (Grant Nos. 12561063 and 11961051), the Science and Technology Project of Xinjiang Uygur Autonomous Region (No. 2024D01C38), and the Doctoral Startup Foundation of Xinjiang University (Grant No. 62031224736).}}

\author{Yali Sun \\
College of Mathematics and System Sciences, Xinjiang University, Urumqi 830046, China \\
y\_lisun@163.com
\and
Mingzu Zhang\thanks{Corresponding author}\\
College of Mathematics and System Sciences, Xinjiang University, Urumqi 830046, China \\
mzuzhang@163.com
\and
Xing Feng \\
School of Science, Jimei University, Xiamen, Fujian 361021, PR China \\
fengxing\_fm@163.com
\and
Xing Yang \\
Department of Engineering Technologies, Monroe Community College, Rochester, NY 14623 \\
xyang4@monroecc.edu
}

\runninghead{Y. Sun, M. Zhang, X. Feng, X. Yang}{A concentration phenomenon for $h$-extra edge-connectivity\ldots}

\maketitle

\begin{abstract}
Reliability assessment of interconnection networks is critical to the design and maintenance of multiprocessor systems. The $(n, k)$-enhanced hypercube $Q_{n,k}$, as a variation of the hypercube $Q_{n}$, was proposed by Tzeng and Wei in 1991. As an extension of traditional edge-connectivity, $h$-extra edge-connectivity of a connected graph $G,$ $\lambda_h(G),$ is an essential parameter for evaluating the reliability of interconnection networks. This article intends to study the $h$-extra edge-connectivity of the $(n,2)$-enhanced hypercube $Q_{n,2}$. Suppose that the link malfunction of an interconnection network $Q_{n,2}$ does not isolate any subnetwork with no more than $h-1$ processors, the minimum number of these possible faulty links concentrates on a constant $2^{n-1}$ for each integer $\lceil\frac{11\times2^{n-1}}{48}\rceil \leq h \leq 2^{n-1}$ and $n\geq 9$. That is, for about $77.083\%$ of values where $h\leq2^{n-1},$ the corresponding $h$-extra edge-connectivity of $Q_{n,2}$, $\lambda_h(Q_{n,2})$, presents a concentration phenomenon. Moreover, the lower and upper bounds of $h$ mentioned above are both tight.
\end{abstract}

\begin{keywords}
Interconnection networks, Reliability and links fault tolerance, Concentration phenomenon, Enhanced hypercubes, $h$-Extra edge-connectivity.
\end{keywords}



\section{Introduction}

\par\setlength{\parindent}{1em} The growing need to process and store massive amounts of data has led to increase more interest in multiprocessor systems. The advent of multiprocessor systems with a large number of processors and links meets this requirement \cite{LFCYJ2024, YZM2023, ZZ2024}. As the scale of such these systems continues to increase, so does the probability of links malfunctioning or failing. In addition, finding an appropriate parameter to measure the reliability of the system is crucial to the design and maintenance of the multiprocessor system \cite{LZHL2024, QMS2024, SFMC2024, YZM2024, YZM2025, ZZCH2024}. It is well known that the underlying topology of an interconnection network can be modeled by a connected graph $G=(V, E)$, with vertex set $V$ representing processors and edge (link) set $E$ representing the communication links between processors. The degree of a vertex in $G$ is the number of edges incident to it. A graph $G$ is called $d$-regular if and only if the degree of each vertex in graph $G$ is $d$.

 The performance of the interconnection network can usually be reflected by the topological parameters of its underlying connected graph $G$ \cite{GE2023}. The connectivity and edge-connectivity are two essential parameters for the reliability and fault tolerance assessment of interconnection networks. The connectivity $\kappa(G)$ or the edge-connectivity $\lambda(G)$ of a connected graph $G$ is defined as the minimum number of vertices or edges whose removal from $G$ makes the remaining graph disconnected. For most common graphs, their connectivity or edge connectivity frequently coincides exactly with their minimum degree. Moreover, the set of faulty edges that renders the graph disconnected is often linked to a unique vertex whose degree matches the minimum degree of the entire network. However, in the case of large, real-world networks, when disconnection occurs, it is rarely due to all faulty edges being concentrated around a single vertex. Consequently, while traditional definitions of connectivity or edge connectivity provide a foundation of a graph $G$, they may not offer a meticulous measure for assessing the fault tolerance capabilities of such networks. Therefore, Harary \cite{ref8} proposed more refined parameters, conditional connectivity and conditional edge-connectivity to meet this need in 1983. Due to the closed interconnection between various local parts of $G,$ when some malfunction of links and processors occurs, some parts of the local structures cannot be destroyed completely. The edges in a forbidden faulty edge set cannot fail simultaneously. By restricting the forbidden faulty edge set to the sets of neighboring edges of any induced subgraph with no more than $h-1$ vertices in the faulty networks, F\`{a}brega and Fiol \cite{ref13} proposed the $h$-extra edge-connectivity in 1996. Given a positive integer $h$, an $h$-extra edge-cut of a connected graph $G$ is defined as a set of edges of $G$ whose deletion yields a disconnected graph with all its components having at least $h$ vertices. The $h$-extra edge-connectivity of a connected graph $G$, denoted as $\lambda_{h}(G)$, is defined as the minimum cardinality of all $h$-extra edge-cuts of $G$.
Given a vertex set $X \subset V(G)$, the complement of a vertex set $X$ is $\overline{X}=V(G)\setminus {X}$. $G[X]$ and $[X,\overline X]_{G}$ can be defined as
the subgraph induced by the vertex set $X$ and the set of edges of $G$ in which each edge
contains one end vertex in $X$ and the other end vertex in $\overline{X}$, respectively.
Let $\xi_{m}(G)=\min \{|[X,\overline{X} ]_{G}| : | X \mid=m \leq\lfloor{\mid V(G)|}/2\rfloor$, $ G[X] $ is connected\}.
If $\lambda_{h}(G)=\xi_{h}(G)$, it is called $\lambda_{h}$-optimal; otherwise, it is not $\lambda_{h}$-optimal. Many authors studied
exact values of the $h$-extra edge-connectivity of some promising interconnection networks, such as hypercubes \cite{ref28}, folded hypercubes \cite{ref3,ref32,ref29,ref27,ref22,ref4,ref40}, $BC$ networks \cite{ref41,ref45,ref12,ref23,ref55}, and $3$-ary $n$-cubes \cite{MZMM2021}. The specific conclusions are shown in Table~\ref{tab1}.

\begin{table}[t]
\centering
\caption{Previous knowns and current results on the $h$-extra edge-connectivity for some classes of interconnection networks.}\label{tab1}
\resizebox{1 \columnwidth}{!}{
\begin{threeparttable}
\begin{tabular}{@{}llllllllllllllllllllll@{}}
\hline
\hline
Graph&$h$ &$\lambda_{h}$ &Authors  \\
\hline
$Q_{n}$  &$1\leq h\leq 2^{\lfloor\frac n2 \rfloor}$    &$nh-ex_h(Q_n), n\geq4 $  &Li and Yang \cite{ref28} in 2013 		\\
\hline
$Q_{n}^{3}$  &$3^{\lceil\frac{n}{2}\rceil+r}-\lfloor\frac{3^{2r+e+1}}{2}\rfloor\leq h\leq 3^{\lceil\frac {n}{2}\rceil+r}$    &$2(\lfloor\frac{n}{2}\rfloor-r)3^{\lceil\frac{n}{2}\rceil+r}, n\geq3 $  &Ma et al. \cite{MZMM2021} in 2021\\
\hline
 $FQ_{n}$ &$1$   &$n+1, n\geq2$   &El-Amawy and Latifi \cite{ref32} in 1991\\
          &$2$     &$2n, n\geq2$   &Zhu and Xu \cite{ref4} in 2006\\
          &$3$   &$3n-1, n\geq5$    &Zhu et al. \cite{ref40} in 2007\\
          &$4$    &$4n-4, n\geq5$   &Chang et al. \cite{ref3} in 2014\\
          &$\leq n$    &$\xi_{h}(FQ_{n}), n\geq 6$  &Yang and Li \cite{ref29} in 2014\\
 &$\leq2^{\lceil\frac{n}{2}\rceil+1}-4$, for odd $n$ &$\xi_h(FQ_n), n\geq4$ &Zhang et al. \cite{ref27} in 2016\\ &$\leq2^{\lceil\frac{n}{2}\rceil+1}-2$, for even $n$
 &$\xi_h(FQ_n), n\geq4$ &Zhang et al. \cite{ref27} in 2016			 			\\
       &$2^{\lceil\frac{n}{2}\rceil+1}-d_r\leq h\leq2^{\lceil\frac{n}{2}\rceil+1}$ \tnote{$\pounds$}
  &$\lfloor\frac{n}{2}\rfloor2^{\lceil\frac{n}{2}\rceil+1}, n\geq4$   &Zhang et al. \cite{ref27} in 2016\\

       &$2^{\lfloor\frac{n}{2}\rfloor+r}-l_r\leq h\leq2^{\lfloor\frac{n}{2}\rfloor+r}$  \tnote{$\ddag$} &$(\lceil\frac{n}{2}\rceil-r+1)2^{\lfloor\frac{n}{2}\rfloor+r}$       &Zhang et al. \cite{ref27} in 2016\\
       &$1\leq h \leq2^{n-1}$  &Algorithm  &Zhang et al. \cite{ref22} in 2018\\
       \hline
 $B_n$  &$1$    &$n$          &Chen et al. \cite{ref41} in 2003\\
 &$2$   &$2n-2$  &Chen et al. \cite{ref41} in 2003\\
 &$3$   &$3n-5$  &Zhu et al. \cite{ref55} in 2006\\
 &$4$   &$4n-8$   &Hong and  Hsieh  \cite{ref45} in 2013\\
 &$\frac{2^{n-1}+2^f}{3} \leq h\leq 2^{n-1}$  \tnote{$\S$}  &$2^{n-1}$ &Zhang et al. \cite{ref23} in 2014\\
 \hline
$Q_{n,k}$  &$1$   &$2n, 5\leq k\leq n-1$ &Sabir et al.  \cite{ref24} in 2019\\
     &$2$ &$3n-1, 5\leq k\leq n-1$ &Sabir et al.  \cite{ref24} in 2019\\
    &$1\leq h\leq2^{\lceil\frac{n}{2}\rceil}-d_{r}, n\leq2k+3, k\geq3$ \tnote{$\pounds$} &$(n+1)h-\sum_{i=0}^{s} t_{i} 2^{t_{i}}+\sum_{i=0}^{s} 2 i 2^{t_{i}}$ &Xu et al. \cite{ref36} in 2021\\
   &$2^{\lceil\frac{n}{2}\rceil}-d_{r}\leq h\leq 2^{\lceil\frac{n}{2}\rceil}, 2k\leq n \leq 2k+3$ &$(n+1)h-\sum_{i=0}^{s} t_{i} 2^{t_{i}}+\sum_{i=0}^{s} 2 i 2^{t_{i}}$ &Xu et al. \cite{ref36} in 2021\\
  &$2^{\lceil\frac{n}{2}\rceil}-d_{r}\leq h\leq 2^{\lceil\frac{n}{2}\rceil}, k+2\leq n \leq 2k-1$ &$\lfloor\frac{n}{2}\rfloor2^{\lceil\frac{n}{2}\rceil}$ &Xu et al. \cite{ref36} in 2021\\
 &$1\leq h\leq2^{\lceil\frac{n}{2}\rceil+1}-d_{r}, n\geq2k+4$ &$(n+1)h-\sum_{i=0}^{s} t_{i} 2^{t_{i}}+\sum_{i=0}^{s} 2 i 2^{t_{i}}$  &Xu et al. \cite{ref36} in 2021\\
 &$2^{\lceil\frac{n}{2}\rceil+1}-d_{r}\leq h \leq 2^{\lceil\frac{n}{2}\rceil+1}$ &$\lfloor\frac{n}{2}\rfloor2^{\lceil\frac{n}{2}\rceil+1}$ &Xu et al. \cite{ref36} in 2021\\
 \bm{$Q_{n,2}$}    &\bm{$\lceil\frac {11\times2^{n-1}}{48}\rceil\leq h \leq 2^{n-1}$}   &\bm{$2^{n-1}, (k=2)$}  &\textbf{Current}\\
\hline
\hline
\end{tabular}
\begin{tablenotes}
\footnotesize
\item[$\ddag$] where $r=1, 2, \ldots, \lceil\frac{n}{2}\rceil-1$ and $l_r=\frac{2^{2r-1}}{3}$ if $n$ is odd and $l_r=\frac{2^{2r-2}}{3}$ if $n$ is even.~~~~~~~~~~~~~~~~~~~
\item[$\S$] where $f=0$ if $n$ is even, and $f=1$ if $n$ is odd.~~~~~~~~~~~~~~~~~~~~~~~~~~~~~~~~~~~~~~~~~~~~~~~~~~~~~~~~~~~~~~~~~~~~~~~~~~~~
\item[$\pounds$] where $d_{r}=2$ if $n$ is even, and $d_{r}=4$ if $n$ is odd.~~~~~~~~~~~~~~~~~~~~~~~~~~~~~~~~~~~~~~~~~~~~~~~~~~~~~~~~~~~~~~~~~~~~~~~~
\end{tablenotes}
\end{threeparttable}}
\end{table}

The enhanced hypercube is a variant of the hypercube. Based on $n$-dimensional hypercube $Q_{n}$, Tzeng and Wei \cite{ref1} proposed the concept of $(n,k)$-enhanced hypercube $Q_{n,k}$ for $1\leq k\leq n-1,$ by adding different types of complementary edges. Compared to $Q_{n}$, the $(n,k)$-enhanced hypercube $Q_{n,k}$ performs very well in many measurements, such as notably reduced mean internode distance, considerably smaller diameter, highly optimized traffic density, robust connectivity, exceptional fault tolerance, remarkable cost-effectiveness \cite{ref1}, superior communication ability, and outstanding diagnosability \cite{ref33}. Undoubtedly, the enhanced hypercubes $Q_{n,k}$ require more hardware to build than hypercubes $Q_{n}$. However, when $n$ is large, the expense is minimal, and the benefits of the structural advantages are substantial. Due to
attractive properties, the $(n,k)$-enhanced hypercube has been widely studied.

Recently, the $h$-extra edge-connectivity and $h$-extra connectivity of $Q_{n,k}$ are widely investigated. For the edge version, Sabir et al. \cite{ref24} investigated $\lambda_h(Q_{n,k})$ for $h = 1, 2$ in 2019; while Xu et al. \cite{ref36} studied $\lambda_{h}(Q_{n,k})$ for $1 \leq h \leq 2^{\lceil\frac{n}{2}\rceil+1}, n \geq 2k + 4 $ and
$1 \leq h \leq 2^{\lceil\frac{n}{2}\rceil}, n \leq 2k + 3, k \geq 3$ in 2021. For the vertex
version, Li et al. \cite{ref25} determined that $\kappa_1(Q_{n,k})$ for $n = k + 1$ and $k \geq 1$, $\kappa_2(Q_{n,k})$ for $n = k + 1$ and $k \geq 3$ and $\kappa_3(Q_{n,k})$ for $n = k + 1$ and $k \geq 3$ in 2020. Sabir et al. \cite{ref24} also determined $\kappa_1(Q_{n,k})$ for $n \geq 7, 2 \leq k \leq n -5$ and $\kappa_2(Q_{n,k})$
 for $n \geq 9$ and $2 \leq k \leq n-7$ in 2019.
 Yin and Xu \cite{ref62} proved $\kappa_g(Q_{n,k})$ for $0\leq g\leq n-k-1, 4\leq k\leq n-5$ and $n\geq9$ in 2022. In particular, for $k=1$, the $(n,k)$-enhanced hypercubes $Q_{n,k}$ is $n$-dimensional folded hypercubes $FQ_{n}$. They also allow a linear number of malfunctions. It is not enough. Aim to go further, we consider the cases under the exponentially many faulty links.

In 2013, Li and Yang investigated $\lambda_h(Q_{n})$ for $1\leq h\leq 2^{\lfloor\frac{n}{2}\rfloor}$ and $n\geq 4$. In 2014, Yang and Li \cite{ref29} determined $\lambda_h(FQ_{n})$ for $h\leq n$ and $n\geq 6$. In 2014, Zhang et al. \cite{ref23} studied $\lambda_h(B_n)$ for $1 \leq h \leq 2^{\lfloor\frac{n}{2}\rfloor+1}$ and $n\geq 4$. In 2014, Yang and Meng \cite{ym2014} investigated $\kappa_g(Q_{n})$ for $0 \leq g \leq n-4$. In 2017, Zhou \cite{Z2017} determined $\kappa_g(HL_n)$  for $0 \leq g \leq n-3$ and $n \geq 5$. Compared to classical Menger theory, both $h$-extra edge-connectivity and $g$-extra connectivity offer a more refined measure of fault tolerance and reliability in interconnection networks. For very small $h$ or $g$, they usually satisfy the $\lambda_h$-optimality $\lambda_{h}(G)=\xi_h(G)$ or $\kappa_g$-optimality $\kappa_g(G)=\xi_g^v(G)$.

 If, for every integer $h_1 \leq h \leq h_2,$ the value of the parameter $\lambda_{h}(G)$ is a constant, then one says that the $h$-extra edge-connectivity of a graph $G$ is concentrated for the interval $h_1 \leq h \leq h_2,$ and this represents a concentration phenomenon. As large as possible $h_1 \leq h \leq h_2$ and $\lambda_{h_{1}-1}(G) < \lambda_{h_{1}}(G)=\lambda_{h}(G)=\lambda_{h_{2}}(G)< \lambda_{h_{2}+1}(G),$ it means that this interval $h_1 \leq h \leq h_2$ is maximal. In particular, for $h=h_1=h_2,$ $\lambda_{h}(G)=\xi_h(G)$ is $\lambda_{h}$-optimal. Recently, Zhang et al. (2016) \cite{ref27} studied the values of $\lambda_h(FQ_n)$, which concentrate on $\lfloor\frac{n}{2}\rfloor2^{\lceil\frac{n}{2}\rceil+1}$ for $2^{\lceil\frac{n}{2}\rceil+1}-d_{r}\leq h \leq 2^{\lceil\frac{n}{2}\rceil+1}$, where  $d_r= 4$ if $n$ is odd and $d_r = 2$ if $n$ is even. Xu et al. (2021) \cite{ref36} investigated the values of $\lambda_h(Q_{n,k})$ concentrate on $\lfloor\frac{n}{2}\rfloor2^{\lceil\frac{n}{2}\rceil}$ for $2^{\lceil\frac{n}{2}\rceil}-d_r\leq h\leq2^{\lceil\frac{n}{2}\rceil},$ where $d_r= 4$ if $n$ is odd and $d_r = 2$ if $n$ is even. With the increase of $r,$ the concentration phenomenon also becomes obvious. Zhang et al. (2016) \cite{ref27} also determined the values of $\lambda_h(FQ_n)$ concentrate on $(\lceil\frac{n}{2}\rceil-r+1)2^{\lfloor\frac{n}{2}\rfloor+r}$ for $2^{\lfloor\frac{n}{2}\rfloor+r}-l_r\leq h\leq2^{\lfloor\frac{n}{2}\rfloor+r},$ where $r=1,2, \cdots, \left\lceil\frac{n}{2}\right\rceil-1$ and $l_r=\frac{2^{2 r}-1}{3}$ if $n$ is odd and $l_r=\frac{2^{2 r+1}-2}{3}$ if $n$ is even. The study of the concentration phenomenon of $\lambda_ h(Q_{n,k})$ has just started. Inspired by the above results, this paper mainly focuses on the most obvious concentration phenomenon of $\lambda_h(Q_{n,2})$ in the subinterval $\lceil\frac{11\times2^{n-1}}{48}\rceil \leq h \leq 2^{n-1}$. For example, the values of $\xi_h(Q_{n,2})$
are marked in blue, the values of $\lambda_h(Q_{n,2})$ are marked in red, and the subinterval we examines is marked in
green (see Fig.~\ref{fig1}). Our main contributions are stated as follows.

\newpage

In the subsequent proof process, we will divide the subinterval $\lceil\frac{11\times2^{n-1}}{48}\rceil \leq h \leq 2^{n-1}$ into $\lceil\frac{n}{2}\rceil-1$ numbers of integer subintervals and we will define the $m_{n, r}$ as follows:

\begin{equation}
m_{n,r}=\left\{\begin{array}{ll}
\sum\nolimits_{i=0}^{2} 2^{n-4-i}+\sum\nolimits_{i=0}^{\lceil\frac{n}{2}\rceil-4-r 2^{n-8-2i}+2^{2r-1-f}}&\text { if } 1\leq r\leq\lceil\frac {n}{2}\rceil-4;\notag(e)\\
\sum\nolimits_{i=0}^{3} 2^{n-4-i} & \text { if } r=\lceil\frac{n}{2}\rceil-3;\notag(f)\\
2^{n-3} & \text { if } r=\lceil\frac{n}{2}\rceil-2;\notag(g)\\
2^{n-1} & \text { if } r=\lceil\frac{n}{2}\rceil-1,\notag(h)
\end{array}\right.\notag
\end{equation}
for $r=1, 2, \cdots, \lceil\frac{n}{2}\rceil-1.$
\begin{figure}[htbp]\label{fig1}
  \centering
 \includegraphics[height=8cm]{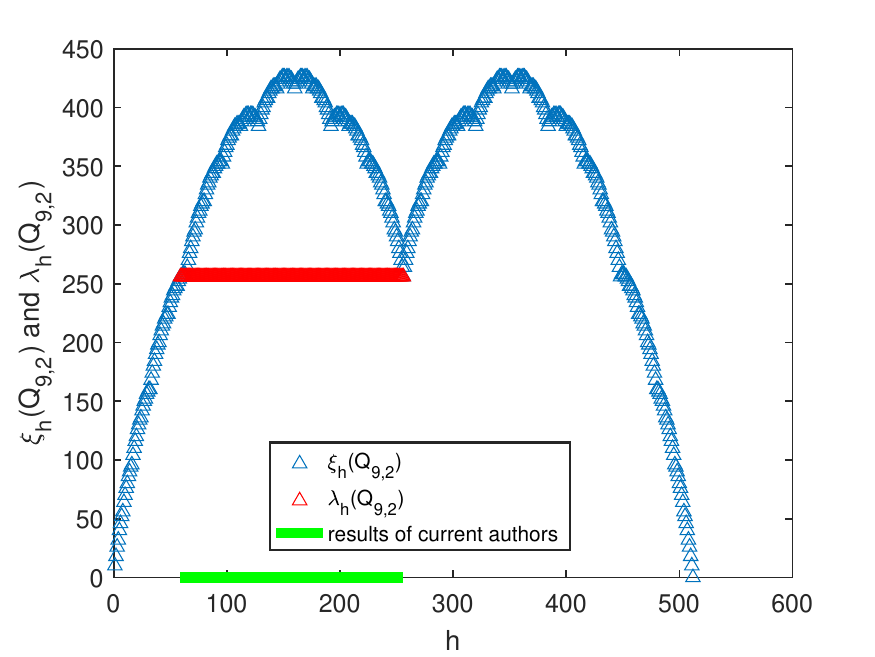}
  \caption{The values of $\xi_h(Q_{9,2})$ and $\lambda_h(Q_{9,2})$.}
\end{figure}

\begin{theorem}\label{c1}
For three integers $n \geq 9,$ $\lceil\frac{11\times2^{n-1}}{48}\rceil\leq h \leq 2^{n-1}$ and $1\leq r\leq\lceil\frac{n}{2}\rceil-1$, the results are as follows:

(a) $\lambda_h(Q_{n,2})=\xi_{2^{n-1}}(Q_{n,2})=2^{n-1}$;

(b) It is $\lambda_h$-optimal ($\lambda_h(Q_{n,2})=\xi_h(Q_{n,2})=2^{n-1}$) if and only if $h=m_{n,r}$.
\end{theorem}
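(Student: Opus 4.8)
The plan is to establish both parts by a careful two‑sided estimate of $\lambda_h(Q_{n,2})$ on the whole subinterval, exploiting the recursive $Q_{1}\times Q_{n-1}$‑type decomposition of the enhanced hypercube together with the known formula $\xi_m(Q_{n,2})$. First I would record the structural facts about $Q_{n,2}$: it is $(n+1)$‑regular, it has $2^n$ vertices, and it decomposes along the first coordinate into two copies of $Q_{n-1}$ joined by a perfect matching of hypercube edges \emph{and} by the complementary edges prescribed by the parameter $k=2$. The crucial quantitative input is the exact value (or at least a sharp formula) of $\xi_m(Q_{n,2})$ for $m\le 2^{n-1}$, which by the results quoted for $Q_{n,k}$ in Table~1 (the Xu et al.\ formula with $k=2$, $n\ge 2k+4=8$) is a piecewise‑linear, concave‑type function of $m$ built from the binary expansion of $m$; I would write $\xi_m(Q_{n,2})=(n+1)m-2f(m)$ for the appropriate ``edge‑isoperimetric deficiency'' $f$, and note that $\xi_{2^{n-1}}(Q_{n,2})=2^{n-1}$ because a sub‑cube on $2^{n-1}$ vertices (one half $Q_{n-1}$) has exactly $2^{n-1}$ outgoing edges (the matching plus complementary edges all leave it).

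\smallskip
\noindent\textbf{Upper bound.} For part~(a), the inequality $\lambda_h(Q_{n,2})\le 2^{n-1}$ is immediate for every $h\le 2^{n-1}$: take $X$ to be the vertex set of one of the two $Q_{n-1}$ halves; then $G[X]$ and $G[\overline X]$ are each connected with $2^{n-1}\ge h$ vertices, so $[X,\overline X]$ is an $h$‑extra edge‑cut of size $2^{n-1}$. Thus it remains to prove the matching lower bound $\lambda_h(Q_{n,2})\ge 2^{n-1}$ on the stated range of $h$.

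\smallskip
\noindent\textbf{Lower bound.} This is the heart of the argument and where I expect the main obstacle. I would proceed by contradiction: suppose $F=[X,\overline X]$ is an $h$‑extra edge‑cut with $|F|<2^{n-1}$, where WLOG $h\le |X|\le |\overline X|$, so $h\le |X|\le 2^{n-1}$. If $G[X]$ is connected then $|F|\ge \xi_{|X|}(Q_{n,2})$, so it suffices to show $\xi_m(Q_{n,2})\ge 2^{n-1}$ for all $m$ with $\lceil\frac{11\cdot 2^{n-1}}{48}\rceil\le m\le 2^{n-1}$; the quantity $m_{n,r}$ is precisely designed so that $\xi_{m}(Q_{n,2})$ first \emph{reaches} the value $2^{n-1}$ at $m=m_{n,r}$ on each of the $\lceil n/2\rceil-1$ dyadic pieces, and the lower endpoint $\lceil\frac{11\cdot 2^{n-1}}{48}\rceil=m_{n,1}$ is exactly the smallest such threshold — this is the computation the statement promises is tight. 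If $G[X]$ is \emph{not} connected, I would split $X$ into its components, each of size $\ge h$ by the $h$‑extra hypothesis, and use the decomposition of $Q_{n,2}$ into the two halves $L\cong Q_{n-1}$, $R\cong Q_{n-1}$: projecting $X$ onto the two halves, writing $X_L=X\cap L$, $X_R=X\cap R$ with $|X_L|=a$, $|X_R|=b$, $a+b=|X|$, one gets $|F|\ge \xi^{Q_{n-1}}_{a}(Q_{n-1}) + \xi^{Q_{n-1}}_{b}(Q_{n-1}) + (\text{matching/complementary edges across } L\!:\!R)$ minus an overcount, and then one runs an induction on $n$ together with the convexity/monotonicity properties of $\xi$. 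The key technical lemma to isolate will be: for $m$ in the stated range, every vertex subset $X$ (connected or not, with all parts of size $\ge h$) satisfies $|[X,\overline X]|\ge 2^{n-1}$; I would prove this by the standard ``shifting/compression'' reduction to Hamming balls plus a direct count of the complementary edges contributed by the $k=2$ structure, the latter being the genuinely new ingredient compared to the folded‑hypercube ($k=1$) analysis of Zhang et al.

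\smallskip
\noindent\textbf{Part~(b).} Once the function $m\mapsto \xi_m(Q_{n,2})$ is under control, part~(b) is a matter of identifying exactly where it equals $2^{n-1}$. On the subinterval $[m_{n,r},2^{n-1}]$ for each $r$, I would show $\xi_m(Q_{n,2})>2^{n-1}$ for $m_{n,r}<m<m_{n,r+1}$ (strict, because adding interior vertices to the extremal configuration strictly increases the boundary once past the threshold $m_{n,r}$) and $\xi_{m_{n,r}}(Q_{n,2})=\xi_{m_{n,r+1}}(Q_{n,2})=2^{n-1}$ (the value is attained exactly at the two endpoints of each dyadic block, corresponding to the half‑cube minus, resp.\ the extremal sub‑structure plus, a full lower‑dimensional sub‑cube). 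Combined with part~(a), which already gives $\lambda_h(Q_{n,2})=2^{n-1}$ throughout, this yields $\lambda_h(Q_{n,2})=\xi_h(Q_{n,2})$ if and only if $\xi_h(Q_{n,2})=2^{n-1}$, i.e.\ iff $h=m_{n,r}$ or $h=m_{n,r+1}$, as claimed. The tightness of the endpoints then follows: at $h=\lceil\frac{11\cdot 2^{n-1}}{48}\rceil-1$ one checks $\xi_h(Q_{n,2})<2^{n-1}$ (so the concentration interval cannot extend left), and at $h=2^{n-1}+1$ both components would need $>2^{n-1}$ vertices, impossible, so the right endpoint is forced as well. The main obstacle, to reiterate, is the non‑connected case of the lower bound: controlling $|[X,\overline X]|$ when $G[X]$ has several large components requires the extra edges of $Q_{n,2}$ to be counted carefully and an induction on $n$ that stays inside the range $\lceil\frac{11\cdot 2^{n-1}}{48}\rceil\le h$, which shrinks relative to $2^{n-1}$ as $n$ grows only through the ceiling, so the base cases $n=9,10$ may need separate verification.
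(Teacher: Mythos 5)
Your overall strategy coincides with the paper's: reduce everything to the function $m\mapsto \xi_m(Q_{n,2})$, show it equals $2^{n-1}$ at the thresholds $m_{n,r}$ and is at least (indeed strictly greater than) $2^{n-1}$ elsewhere on $\lceil\frac{11\times 2^{n-1}}{48}\rceil\le m\le 2^{n-1}$, and read off (a) and (b). But the proposal leaves precisely these substantive facts unproven. The identities $\xi_{m_{n,r}}(Q_{n,2})=2^{n-1}$ for every $r$ (four separate cases according to the shape of $m_{n,r}$), and the strict inequality $\xi_m(Q_{n,2})>2^{n-1}$ for $m$ strictly between consecutive thresholds, are the content of the paper's Lemmas 7 and 8; they are proved there by genuine computation with the exact binary-expansion formula for $ex_m(Q_{n,2})$ (Lemma 2), the splitting identity of Lemma 5, the superadditivity $ex_{m_0+m_1}(Q_n)\ge ex_{m_0}(Q_n)+ex_{m_1}(Q_n)+2m_0$, and the bound $ex_p(Q_n)\le tp$. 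Writing ``the quantity $m_{n,r}$ is precisely designed so that $\xi_m$ first reaches $2^{n-1}$'' asserts the conclusion rather than deriving it, and the lower endpoint being exactly $m_{n,1}=\lceil\frac{11\times2^{n-1}}{48}\rceil$ is likewise a calculation you never perform. Without these computations neither the lower bound in (a) nor the ``if and only if'' in (b) is established.

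Second, the step you single out as the main obstacle --- the case where $G[X]$ is disconnected --- does not arise, so your proposed compression/induction machinery is both unnecessary and, as sketched, far from a proof. A minimum $h$-extra edge-cut leaves exactly two components (if there were three or more, deleting from $F$ the edges between two components that are joined in $G$ yields a smaller $h$-extra edge-cut), whence $\lambda_h(Q_{n,2})=\min\{\xi_m(Q_{n,2}): h\le m\le 2^{n-1}\}$; this identity is exactly what the paper invokes (its Lemma 4, with the two-component argument recalled in Section 2), and it removes the disconnected case entirely. Finally, a small but real slip: your justification of $\xi_{2^{n-1}}(Q_{n,2})=2^{n-1}$ is internally inconsistent. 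The $k=2$ complementary edges send $x_nx_{n-1}\cdots x_1$ to $x_n\bar x_{n-1}\cdots\bar x_1$, so they fix the leading bit and lie \emph{inside} each half determined by $x_n$; the cut around that half consists only of the dimension-$n$ matching, of size $2^{n-1}$. If, as you write, ``the matching plus complementary edges all leave'' the half, its boundary would be $2^n$, not $2^{n-1}$ (and indeed a half determined by any of the other $n-1$ coordinates has boundary $2^n$), so the choice of half matters and should be made explicitly.
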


 The rest of this paper is organized as follows.
 Section~\ref{sec2} introduces some related definitions and lemmas. Section~\ref{sec3} gives several lemmas about the properties of the function $\xi_{m}(Q_{n,2})$. Section~\ref{sec4} determines
that the value of the $h$-extra edge-connectivity of $Q_{n,2}$  concentrates on a constant $2^{n-1}$.
The last section concludes our results.
\section{ Preliminaries }\label{sec2}
Recall that the $h$-extra edge-connectivity of a connected graph $G$, $\lambda_{h}(G)$, is the minimum number of an edge-cut of the graph
$G$ whose removal separates the graph $G$ with all resulting components having at least $h$ vertices. If $F$ be a minimum $h$-extra edge-cut of a connected graph $G,$ then there is a fact that $G-F$ has exactly two components. In fact, if $F$ is the minimum $h$-extra edge-cut of the connected graph $G,$ $G-F$ has $p$ components $C_1, C_2, \cdots, C_p$ with at least $h$ vertices, $p\geq3$. Since the graph $G$ is  connected, there must exist integer $i, j$ with $[V_{C_i}, V_{C_j}]_{G}\neq \emptyset, |F_1|=|F\setminus[V_{C_i}, V_{C_j}]_{G}|<|F|$. Thus, $F_1$ is also a  minimum $h$-extra edge-cut of $G$, which contradicts the minimality of $F$. Hence, $G-F$ has exactly two components. Although the original definition of $\xi_m(G)$  only requires that $G[X]$ is connected, we do
need that both $G[X]$ and $G[\overline {X}]$ are connected in this paper. The function $\xi_m(Q_{n,2})$ of $(n,2)$-enhanced  hypercubes $Q_{n,2}$ does have the same result after modifying this condition. Let
\begin{align}
\xi_{m}(G)=\min\{|[X,\overline{X}]_{G}|: | X|=m\leq\lfloor |V(G)| /2\rfloor, \text{and} ~\text{both}~G[X]~ \text{and}~ G[\overline{X}]~\text{are}~ \text{connected}\}.
\end{align}
For some $d$-regular graphs,
\begin{align}
\xi_{m}(G)=d m-e x_{m}(G)\label{yy},
\end{align}
where $ex_{m}(G)$ is twice the maximum number of edges among all $m$ vertices induced subgraphs for each $m\leq \left\lfloor{\lvert V(G)\rvert}/2\right\rfloor$.
Actually, if we can find $X_{m}^{\ast}\subseteq V(G), \lvert X_{m}^{*} \rvert=m $, with $ex_{m}(G)=2 \lvert E(G[X_{m}^{*})] \lvert$, and so that, both $G[X_{m}^{*}]$ and $G[\overline {X_{m}^{*}}]$ are connected. Then
$$ \xi_{m}(G)=\lvert[X_{m}^{*}, \overline {X_{m}^{*}}]_{G}\rvert=dm-ex_m(G)=dm-2\lvert E(G[X_{m}^{*}]\rvert.$$
Specifically, for the hypercube, the folded hypercube and the $(n, 2)$-enhanced hypercube mentioned in this article, the conclusion $\xi_{m}(G)=dm-ex_{m}(G)$ is true. By the definition of the $h$-extra edge-connectivity of $G$, once the parameter $\lambda_{h}(G)$ is well-defined for $1\le h\le \lfloor|V(G)|/ 2\rfloor$, the $\xi_{m}(G)$ offers the upper bound for the $\lambda_h(G)$. Due to the fact that the smallest $h$-extra edge-cut of graph $G$ exactly divides graph $G$ into two connected components \cite{ref22}, so the parameter $\lambda_h(G)$ (by Zhang et al. \cite{ref22} page 299) can be calculated by
\begin{align}
\lambda_{h}(G)=\min \left\{\xi_{m}(G): h \leq m \leq\left\lfloor{\lvert V(G)\rvert}/2\right\rfloor\right\}.
\end{align}

Let $n, k$ be positive integers. The definitions of the $n$-dimensional hypercube
$Q_n,$ the folded hypercube $FQ_n$ and the $(n,k)$-enhanced hypercube $Q_{n,k}$ are stated as follows.

\begin{definition}\cite{s1977} For an integer $n \geq1,$ the $n$-dimensional hypercube, denoted by $Q_{n},$ is a graph with $2^{n}$ vertices. The vertex set $V(Q_n)=\{x_n x_{n-1} \cdots x_1: x_i \in\{0,1\}, 1 \leq i \leq n\}$ is the set of all $n$-bit binary strings. Two vertices  $x=x_{n} x_{n-1} \cdots x_{2}x_{1}$ and $y=y_{n} y_{n-1} \cdots y_{2}y_{1}$ of $Q_n$ are adjacent if and only if they differ in exactly one position.
\end{definition}

For any vertices $x=x_{n} x_{n-1} \cdots x_{2}x_{1}$ and $y=y_{n} y_{n-1} \cdots y_{2}y_{1},$ the edge $e=(x,y)$ is called $k$-complementary edge ($1\leq k\leq n-1$) if and only if $y_i=x_i$ for $n-k+1<i\leq n,$ and $y_j=\overline{x_j}$ for $1\leq j\leq n-k+1$.

As a variant of the hypercube, the $n$-dimensional folded hypercube $F Q_n$, first proposed by El-Amawy and Latifi \cite{ref32}, is a graph obtained from the hypercube $Q_n$ by adding an edge between every pair of vertices $x_n x_{n-1} \cdots x_1$ and $\overline{x_n}~ \overline{x_{n-1}} \cdots \overline{x_1}$, where $\overline{x_i}=1-x_i$ for all $1\leq i \leq n$. The $FQ_n$ is to add complementary edges between two $(n-1)$-dimensional sub-cubes. Motivated by this, by adding $k$-complementary edges between two paired $(n-k)$-dimensional sub-cubes, in 1991, Tzeng and Wei \cite{ref1} introduced the $(n,k)$-enhanced hypercube $Q_{n,k}$.
\begin{definition}
For two integers $n$ and $k$ with $1 \leq k \leq n-1$, the $(n, k)$-enhanced hypercube, denoted by $Q_{n, k},$ is defined to be a graph with the vertex set $V\left(Q_{n, k}\right)=\{x_{n}x_{n-1} \cdots x_{2}x_{1} : x_{i}\in \{0, 1\}, 1\leq i \leq n\}$. Two vertices $x=x_{n} x_{n-1} \cdots x_{2}x_{1}$ and $y=y_{n} y_{n-1} \cdots y_{2}y_{1}$ are adjacent if $y$ satisfies one of the following two conditions:

(1) $y=x_{n} x_{n-1} \cdots  {x}_{i+1}\bar{x}_{i}{x}_{i-1} \cdots x_{2}x_{1}$ for $1 \leq i \leq n,$ where $\bar{x}_{i}=1-x_{i}$ or

(2) $y=x_{n}x_{n-1} \cdots \bar{x}_{n-k+1} \bar{x}_{n-k} \cdots \bar{x}_{2}\bar{x}_{1}$.
\end{definition}

Note that $Q_{n,1}$ is the $n$-dimensional folded hypercube $FQ_{n}$. The $(n,2)$-enhanced hypercube $Q_{n,2}$ is obtained from the hypercube $Q_n$ by adding $2$-complementary edges between two pairs of vertices $x=x_{n} x_{n-1} \cdots x_{2}x_{1}$ and $y=x_{n}\bar x_{n-1} \cdots \bar{x}_{2}\bar{x}_{1}$ in two $(n-1)$-dimensional sub-cubes.

\begin{figure}[t]\label{fig2}
\centering
\begin{minipage}[t]{0.45\textwidth}
\includegraphics[height=4cm]{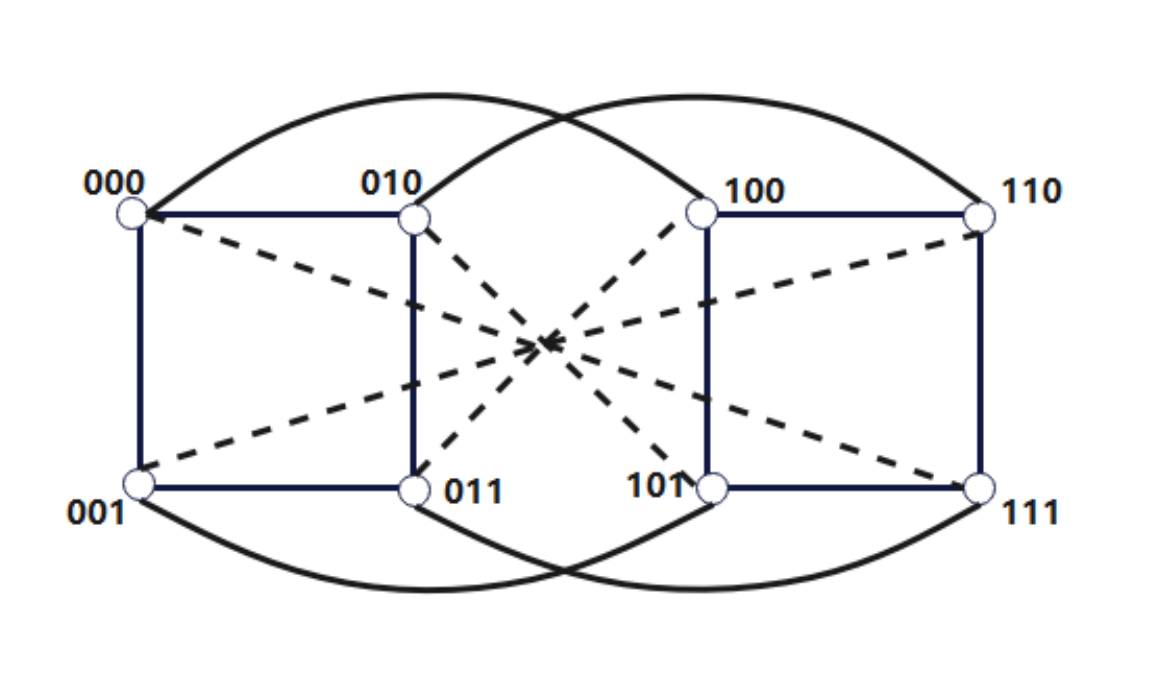}
\end{minipage}%
\begin{minipage}[t]{0.45\textwidth}
\centering
\includegraphics[height=4cm]{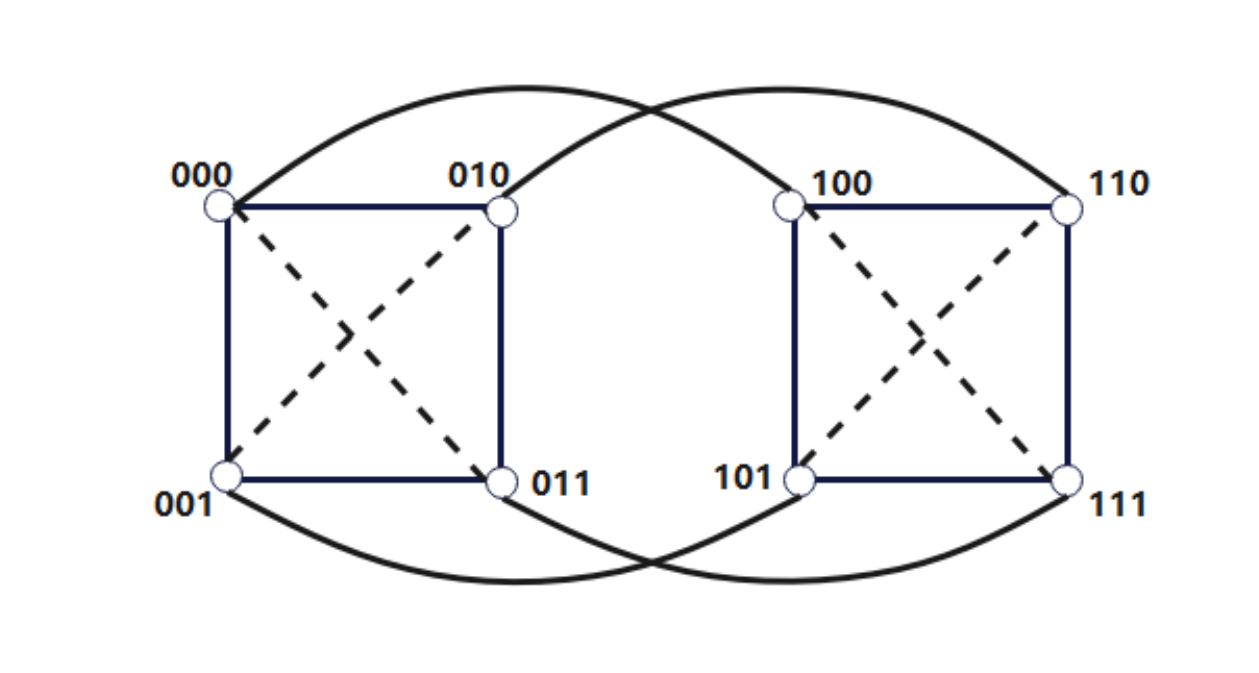}
\end{minipage}

\begin{minipage}[t]{0.33\textwidth}
\centering
\includegraphics[height=5cm]{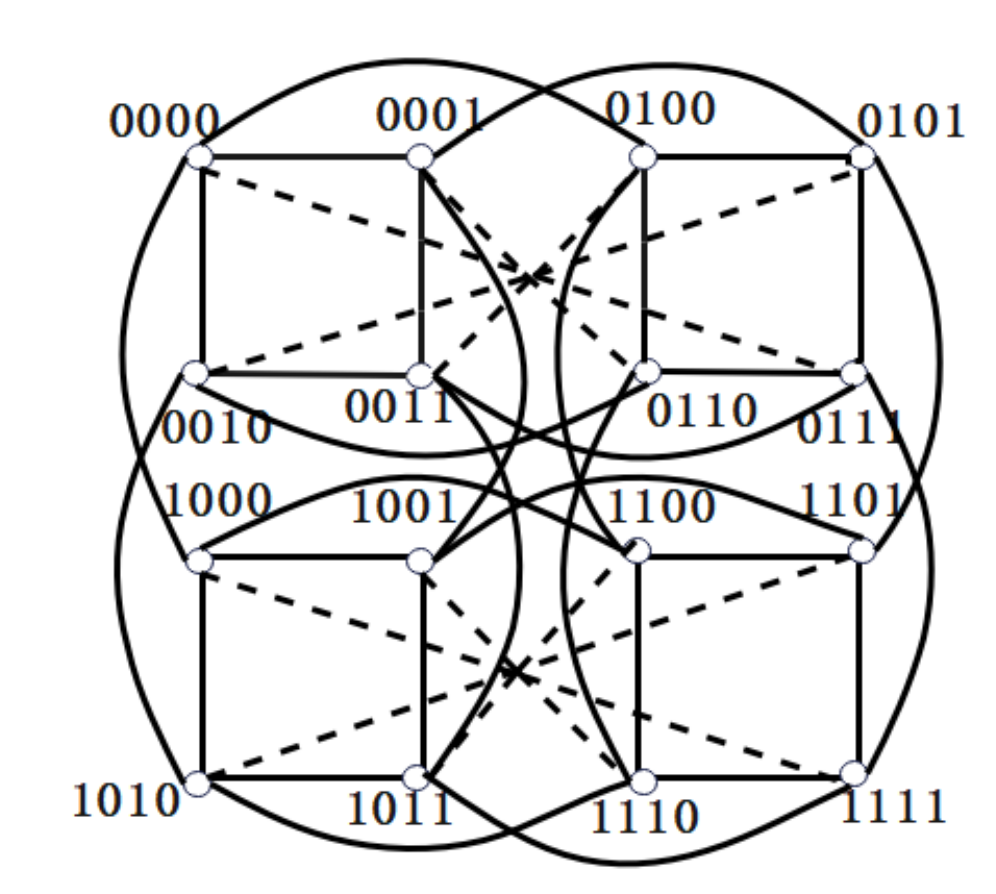}
\end{minipage}%
\caption{$Q_{3,1}$ (i.e. $FQ_3$), $Q_{3,2}$ and $Q_{4,2}$.}
\end{figure}

The $(n,2)$-enhanced hypercube $Q_{n,2}$ is $(n + 1)$-regular and $(n + 1)$-connected with $2^{n}$ vertices and $(n + 1)2^{n-1}$ edges \cite{ref1,ref33}.{The enhanced hypercubes $Q_{3,1}$, $Q_{3,2}$ and $Q_{4,2}$ are illustrated in Fig.~\ref{fig2}, where the complementary edges are represented by a short
dotted line. As the integer $n$ grows, the scale of $Q_{n,2}$  expands exponentially, and the topological structure of $Q_{n,2}$ becomes more and more complicated. Thus, the bitmaps of the adjacency matrix of $Q_{n,2}$  represent the adjacent relationship between vertices of $Q_{n,2}$. These figures of the adjacency matrix of $Q_{4,2}$, $Q_{5,2}$, $Q_{6,2}$ and $Q_{7,2}$ are shown in Fig.~\ref{fig3} (in four figures, the dark  pixel at location $(x, y)$ corresponds to the edges between vertices $x$ and $y$). The bitmaps of the adjacency matrix of $Q_{n,2}$ have high symmetry, iterative fractal, and sparsity.

\begin{figure}[htbp]
\centering
\begin{minipage}[t]{0.24\textwidth}
\vspace{4pt}
\centerline{
\includegraphics[width=4.5cm,height=4.5cm]{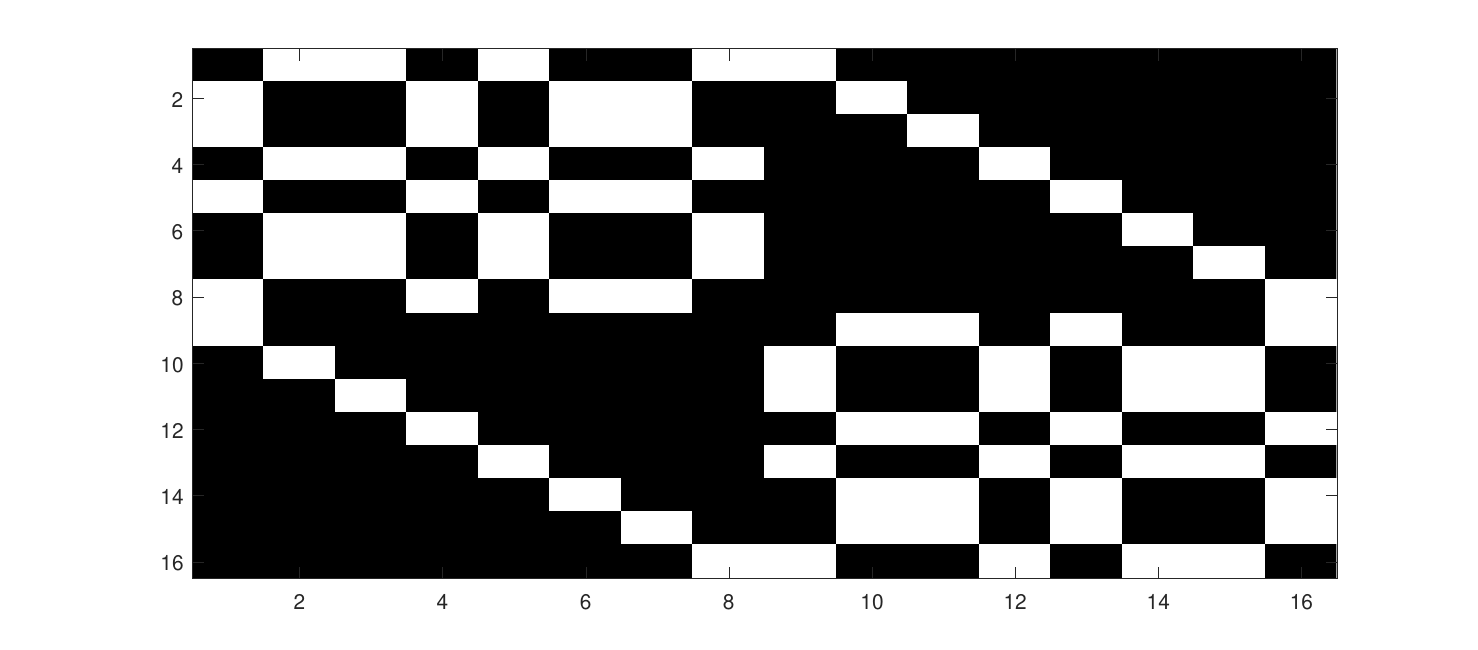}}
\end{minipage}
\begin{minipage}[t]{0.24\textwidth}
\vspace{4pt}
\centerline{
\includegraphics[width=4.5cm,height=4.5cm]{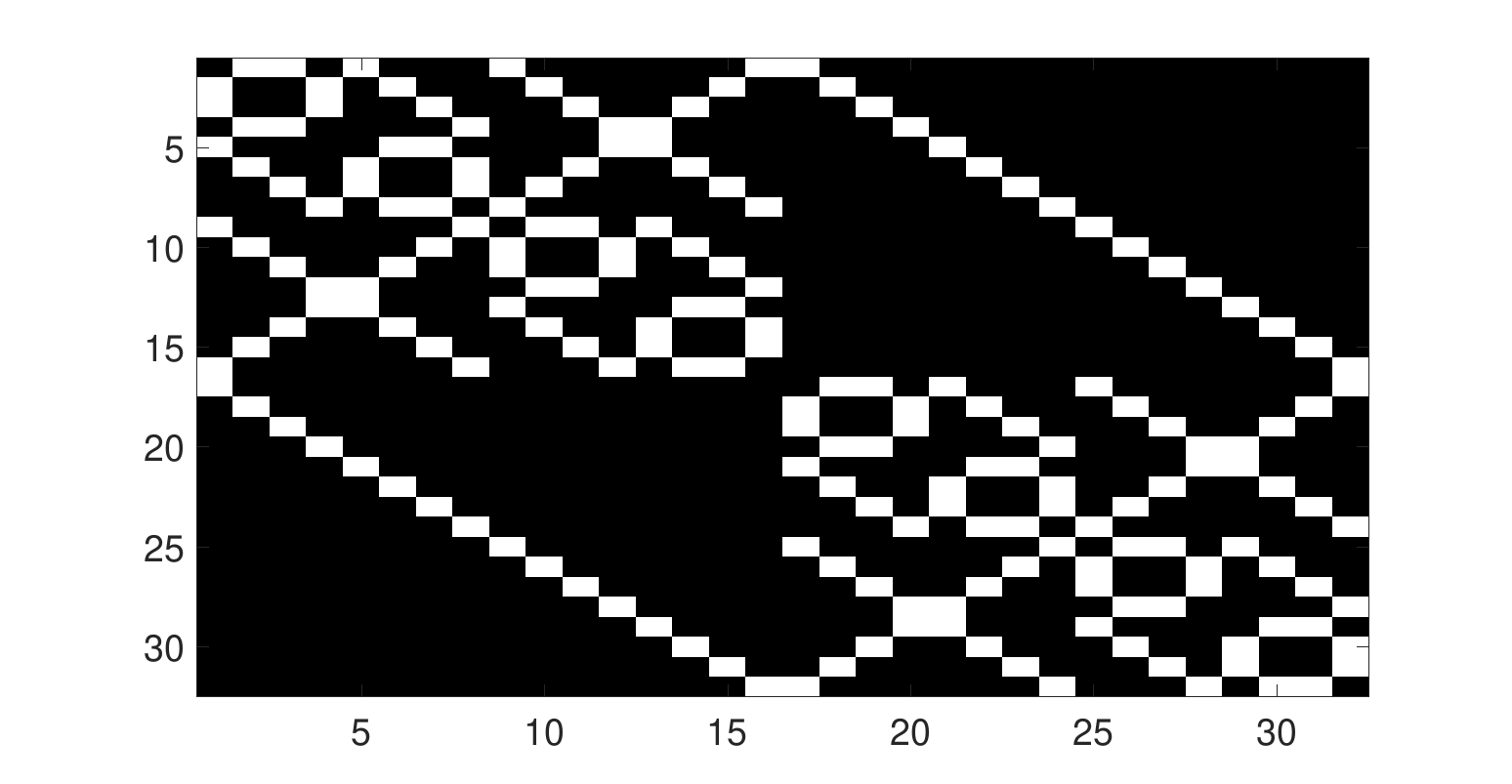}}
\end{minipage}
\begin{minipage}[t]{0.24\textwidth}
\vspace{4pt}
\centerline{
\includegraphics[width=4.5cm,height=4.5cm]{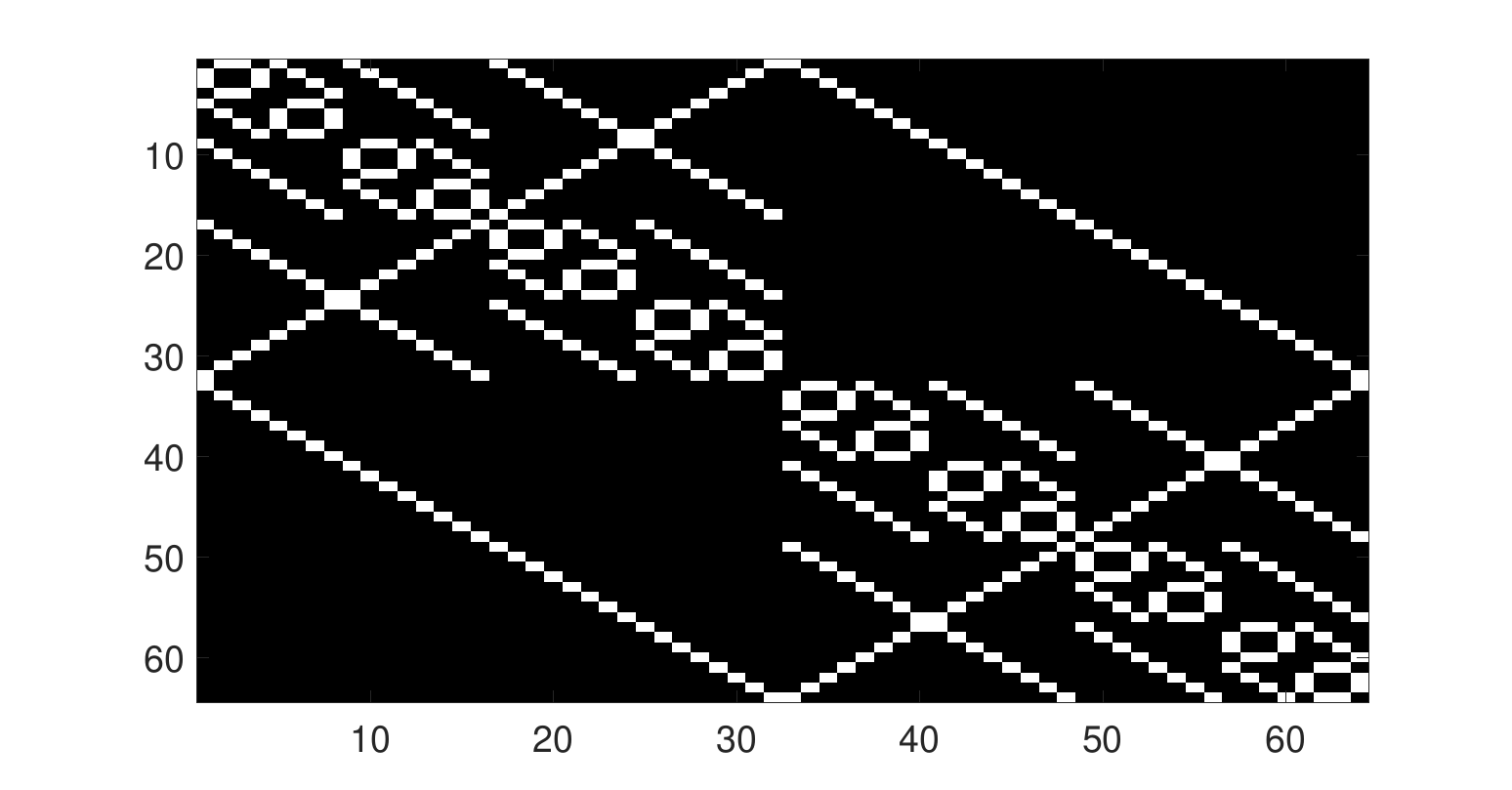}}
\end{minipage}
\begin{minipage}[t]{0.24\textwidth}
\vspace{4pt}
\centerline{
\includegraphics[width=4.5cm,height=4.5cm]{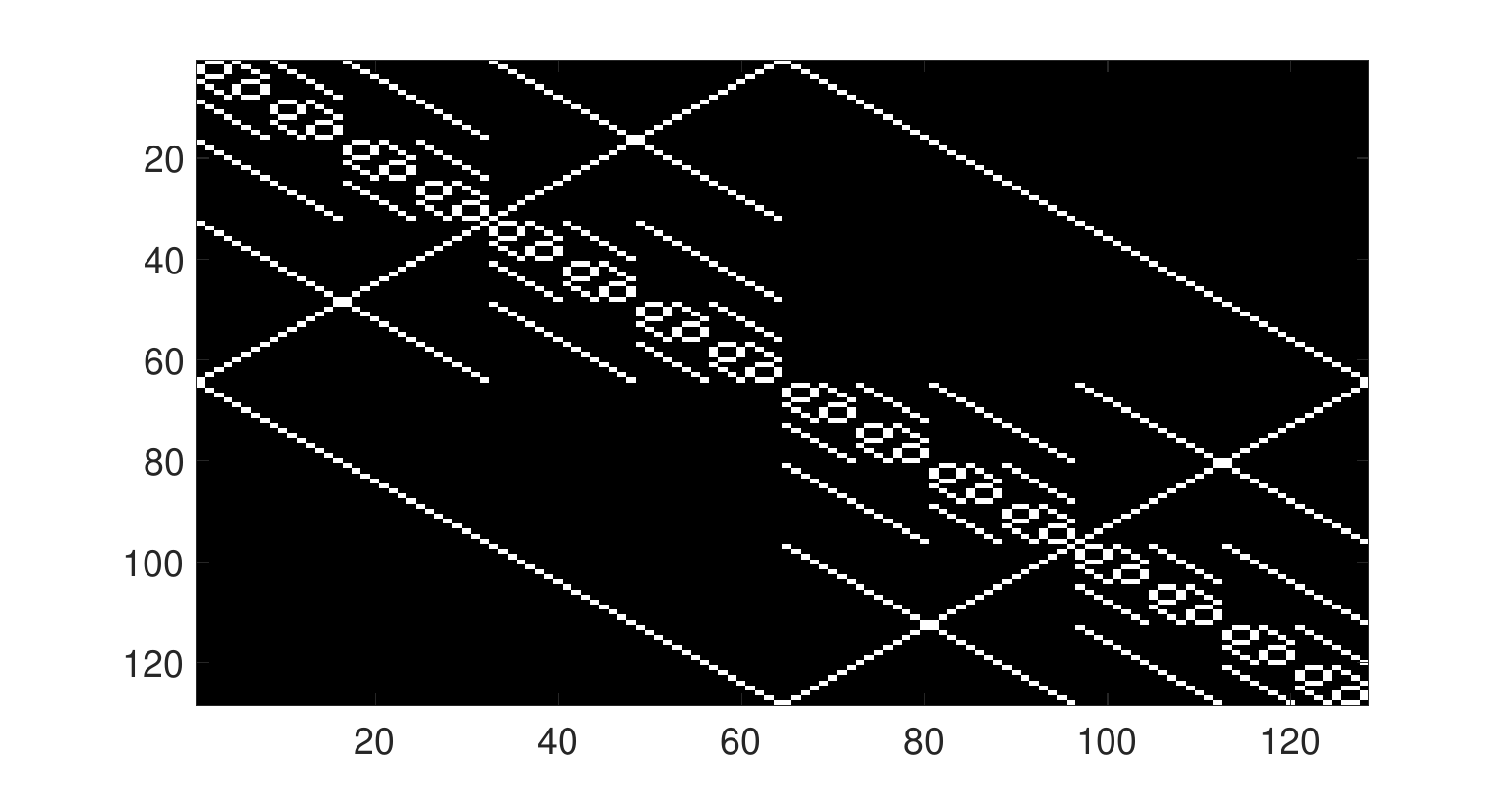}}
\end{minipage}
\caption{The bitmaps of adjacency matrix of $Q_{n,2}$ for $4 \le n \le 7$.}
\label{fig3}
\end{figure}

  For positive integers $1\leq m\leq 2^{n-1}$, there exists a unique binary representation $m=\sum_{i=0}^s 2^{t_i}$, where $t_0=\left\lfloor\log _2 m\right\rfloor$, $t_i=\left\lfloor\log _2\left(m-\sum_{r=0}^{i-1} 2^{t_r}\right)\right\rfloor$ for $i\geq1$, and $t_0>t_1>\cdots>t_{s}\ge0$. These conditions are used throughout the article when not causing ambiguity.
For every vertex $x=x_{n} x_{n-1} \cdots x_{1}$ of the $(n,2)$-enhanced hypercubes $Q_{n,2}$, it also can be denoted by decimal number $\sum_{i=1}^{n} x_{i} 2^{i-1}, x_{i}\in \{0,1\}$.  Let $S_m$ be the set $\{0,1,2, \cdots, m-1\}$ (under decimal representation). And $L_m^n$ denotes the corresponding set represented by $n$-binary strings. By the construction of $Q_{n,2},$ $L_m^n$ is a subset of $V\left(Q_{n,2}\right)$ and $Q_{n,2}\left[L_m^n\right]$ is a subgraph induced by $L_m^n$ in $Q_{n,2}$. Li and Yang proved that both $Q_{n}\left[L_m^n\right]$ and $Q_{n}\left[\overline{L_m^n}\right]$ are connected \cite{ref28} and the fact that $Q_{n}\left[L_m^n\right]$ and $Q_{n}\left[\overline{L_m^n}\right]$ are subgraphs of $ Q_{n,2}\left[L_m^n\right]$ and $ Q_{n,2}\left[\overline{L_m^n}\right]$, respectively, so both $ Q_{n,2}\left[L_m^n\right]$ and $ Q_{n,2}\left[\overline{L_m^n}\right]$ are connected. The subgraphs induced by $L_{m}^{4}$ in $Q_{n,2}$ for $m=4, 6$ and 8 are shown in Fig.~\ref{figure4}.

\medskip

 Happer \cite{h1964}, Li and Yang \cite{ref28} independently obtained the exact expression of the function $ex_{m}(Q_{n})$.
\begin{lemma}\label{s1}
(1) \cite{ref28} For a positive integer $m=\sum_{i=0}^{s} 2^{t_{i}}\leq2^{n}$, $\xi_m(Q_n)=nm-ex_{m}(Q_{n}),$ where $ex_{m}(Q_{n})=2\lvert E(Q_{n}[L_{m}^{n}])\rvert=\sum_{i=0}^{s} t_{i} 2^{t_{i}}+\sum_{i=0}^{s} 2 i 2^{t_{i}}$.
(2) \cite{ref28} $ex_{m_{0}+m_{1}}(Q_{n})\ge ex_{m_{0}}(Q_{n})+ex_{m_{1}}(Q_{n})+2m_{0}$ for positive integers $m_{0}\le m_{1}$.
\end{lemma}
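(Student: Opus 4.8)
The plan is to treat the two parts of Lemma~\ref{s1} separately: part (1) is an extremal (edge-isoperimetric) statement about $Q_n$, while part (2) is a superadditivity inequality that I would deduce from the explicit count established in part (1). For part (1) I would first dispose of the identity $\xi_m(Q_n)=nm-ex_m(Q_n)$ using $n$-regularity: for any $X\subseteq V(Q_n)$ with $|X|=m$, counting edge endpoints gives $|[X,\overline{X}]_{Q_n}|=\sum_{v\in X}\deg(v)-2|E(Q_n[X])|=nm-2|E(Q_n[X])|$, so minimizing the cut over connected subgraphs is the same as maximizing the number of induced edges, and the minimum equals $nm-ex_m(Q_n)$ once one knows the maximizer may be taken with both $Q_n[X]$ and $Q_n[\overline{X}]$ connected (true for $X=L_m^n$, as recalled in the excerpt). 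The substance is therefore the remaining two claims: that the initial segment $L_m^n$ is edge-maximal among all $m$-subsets, and that its induced edge count has the stated closed form.

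For edge-maximality I would run the classical compression/isoperimetry argument by induction on $n$: split $Q_n$ into the two $(n-1)$-subcubes $Q^0,Q^1$ obtained by fixing the top coordinate, write any $m$-set $X$ as $X_0\cup X_1$ with $X_i\subseteq Q^i$, and use $2|E(Q_n[X])|=2|E(Q^0[X_0])|+2|E(Q^1[X_1])|+2|X_0\cap X_1|$, where $X_0,X_1$ are identified through the matching across the top coordinate. Applying the inductive hypothesis to each $Q^i$ and then optimizing $|X_0|$ against $|X_1|$ forces the extremal pair to be nested initial segments, hence $X$ itself to be an initial segment. For the closed form I would compute $f(m):=2|E(Q_n[L_m^n])|$ recursively along the binary expansion $m=\sum_{i=0}^s 2^{t_i}$: writing $m'=m-2^{t_0}<2^{t_0}$, the first $2^{t_0}$ vertices span a $t_0$-dimensional subcube contributing $t_0 2^{t_0}$ (doubled edges), the remaining $m'$ vertices induce a copy of $L^{t_0}_{m'}$ contributing $f(m')$, and exactly $m'$ edges cross between the blocks (each new vertex $u+2^{t_0}$ matches to $u<m'$). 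This yields $f(m)=t_0 2^{t_0}+f(m')+2m'$, and induction on $s$ collapses it to $\sum_{i=0}^s t_i 2^{t_i}+\sum_{i=0}^s 2i\,2^{t_i}$. Equivalently, the marginal doubled-edge gain on inserting vertex $j$ equals $2b(j)$, where $b(j)$ is the number of $1$'s in the binary expansion of $j$, so $ex_m(Q_n)=2\sum_{j=0}^{m-1}b(j)$, and one checks this agrees with the closed form.

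For part (2) I would argue by construction, using that $ex_{m_0+m_1}(Q_n)$ is a maximum, so any $(m_0+m_1)$-subset is a lower bound. Take an edge-optimal set $A=L^n_{m_1}$ with $2|E(A)|=ex_{m_1}$, let $A'\subseteq A$ be its first $m_0$ vertices (edge-optimal for $m_0$ since $m_0\le m_1$, so $2|E(A')|=ex_{m_0}$), and let $B=A'\oplus e_j$ be the image of $A'$ under flipping a single coordinate $j$. Flipping a fixed coordinate is an automorphism, so $2|E(B)|=ex_{m_0}$; moreover each $v\in A'$ is joined to $v\oplus e_j\in B$, giving a matching of $m_0$ edges between $A$ and $B$. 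Hence $2|E(Q_n[A\cup B])|\ge ex_{m_1}+ex_{m_0}+2m_0$, which is the desired inequality, provided $B\cap A=\varnothing$ (otherwise the matching edges become internal). Choosing $j$ with $2^{j-1}\ge m_1$ pushes every vertex of $A'$ above $m_1-1$ and forces disjointness, which is available precisely when $m_1\le 2^{n-1}$, covering every case used in this paper (where $m\le 2^{n-1}$).

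The main obstacle is extending part (2) to the full range $m_1\le 2^n-m_0$, where no free top coordinate exists for the flip. Via the formula $ex_m(Q_n)=2\sum_{j<m}b(j)$ of part (1), the inequality is equivalent to the digit-sum statement $\sum_{i=0}^{m_0-1}\bigl(b(m_1+i)-b(i)\bigr)\ge m_0$, valid for all $1\le m_0\le m_1$. This does not reduce to a term-by-term bound (the per-index inequality $b(m_1+i)\ge b(i)+1$ can fail), and a naive induction through the doubling recursions $ex_{2m}=2\,ex_m+2m$ and $ex_{2m+1}=2\,ex_m+2m+2b(m)$ loses too much in the mixed-parity case. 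I expect the real work to be a carry-counting analysis (writing $b(m_1+i)-b(i)=b(m_1)-c(m_1,i)$ with $c$ the number of binary carries, by Kummer) together with a strengthened inductive hypothesis; everything else is either the standard compression induction or the direct edge count above.
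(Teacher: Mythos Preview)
The paper does not supply its own proof of Lemma~\ref{s1}: both parts are quoted as known results from Li and Yang~\cite{ref28} and are used as black boxes later (in the proofs of Lemmas~\ref{s8} and~\ref{s10}). So there is no in-paper argument to compare against, and your proposal is effectively a reconstruction of a cited result. Your treatment of part~(1)---regularity to reduce $\xi_m$ to $ex_m$, compression to show $L_m^n$ is edge-extremal, and the recursion along the binary expansion for the closed form---is the standard argument and is correct.

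For part~(2), your flip construction is valid whenever a coordinate disjoint from $L_{m_1}^n$ exists, i.e.\ for $m_1\le 2^{n-1}$, and you correctly flag the range $m_1>2^{n-1}$ as an obstacle. But no new idea (carry counting, Kummer, strengthened induction) is needed to close it. The point is that $ex_m(Q_n)$ depends only on the binary expansion of $m$, not on $n$: this is immediate from the formula you derived in part~(1), and the paper itself invokes the identity $ex_p(Q_n)=ex_p(Q_{n-1})$ in the proof of Lemma~\ref{s10}. Consequently, run your construction inside $Q_{n+1}$ (or any $Q_N$ with $2^{N-1}\ge m_1$), where a free top coordinate is guaranteed; the resulting inequality
\[
ex_{m_0+m_1}(Q_{n+1})\ \ge\ ex_{m_0}(Q_{n+1})+ex_{m_1}(Q_{n+1})+2m_0
\]
transfers verbatim to $Q_n$ since $m_0,\,m_1,\,m_0+m_1\le 2^n$. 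This removes the obstacle you identified and completes the proof over the full range.
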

Arockiaraj et al. \cite{ref31} obtained the exact expression of the function $ex_{m}(Q_{n,k})$ in 2019, which was rewritten by Xu et al. in 2021 \cite{ref36}.

\medskip

In the following, we let $[x]^{+}= x$ for $x \geq 0$; otherwise, $[x]^{+}=0$.

\newpage

\begin{lemma}\label{s3} \cite{ref31, ref36}
 For each integer $1 \leq m \leq 2^{n}$ and $ m=\sum_{i=0}^{s}2^{t_{i}},$ $\xi_m(Q_{n,2})=(n+1)m-ex_{m}(Q_{n,2}),$ where
\begin{equation}
\begin{array}{ll}
ex_m\left(Q_{n,2}\right)&=2\left\lvert E(Q_{n,2}[L_{m}^{n}])\right\rvert \\
&=2\left\lvert E\left(Q_{n}[L_{m}^{n}]\right)\right\rvert+ \lfloor\frac{m}{2^{n-1}}\rfloor2^{n-1}+2\left[m-\lfloor\frac{m}{2^{n-1}}\rfloor2^{n-1}-2^{n-2}\right]^{+}\\
&=\left\{\begin{array}{ll}
ex_{m}(Q_{n}) & \text { if } 1 \leq m \leq 2^{n-2};\\
ex_{m}(Q_{n})+2m-2^{n-1} & \text { if } 2^{n-2}<m \leq 2^{n-1}; \\
ex_{m}(Q_{n})+2^{n-1} & \text { if } m>2^{n-1} \text { and } m=2^{n-1}+x,\\
                                                                      &0 \leq x<2^{n-2};\\
ex_{m}(Q_{n})+2x & \text { if } m>2^{n-1} \text { and } m=2^{n-1}+x,\\
                                                                                &2^{n-2}\leq x<2^{n-1}.
\end{array}\right.
\end{array}
\end{equation}
\end{lemma}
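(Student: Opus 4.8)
The plan is to compute $2\lvert E(Q_{n,2}[L_m^n])\rvert$ directly by exploiting the decomposition of $Q_{n,2}$ into the hypercube $Q_n$ together with the $2$-complementary edges. First I would record that $Q_{n,2}$ is obtained from $Q_n$ by adding, for every vertex $x=x_nx_{n-1}\cdots x_1$, the single edge to $x_n\bar x_{n-1}\cdots\bar x_1$; since this operation fixes $x_n$ and complements every other coordinate, it is a fixed-point-free involution, so (for $n\ge 3$, where such an edge differs in $n-1\ge 2$ coordinates and hence is not a hypercube edge) the $2$-complementary edges form a perfect matching $M$ on $V(Q_{n,2})$ and $E(Q_{n,2})=E(Q_n)\cup M$ is a disjoint union. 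Consequently, for any vertex set $X$ we have $\lvert E(Q_{n,2}[X])\rvert=\lvert E(Q_n[X])\rvert+\lvert M\cap E(Q_{n,2}[X])\rvert$. Invoking the isoperimetric optimality of the initial segment $L_m^n$ for $Q_{n,2}$ (established in \cite{ref31} and restated in \cite{ref36}), it suffices to evaluate the right-hand side at $X=L_m^n$. The first summand is handed to us by Lemma~\ref{s1}(1), which gives $2\lvert E(Q_n[L_m^n])\rvert=ex_m(Q_n)$, so the whole task reduces to counting how many matching edges of $M$ lie inside $S_m=\{0,1,\dots,m-1\}$.

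Next I would translate $M$ into decimal. Writing a vertex of the lower half ($x_n=0$) as an $(n-1)$-bit integer $x\in\{0,\dots,2^{n-1}-1\}$, its $2$-complementary partner is $(2^{n-1}-1)-x$, so within the lower half $M$ pairs $x$ with $2^{n-1}-1-x$; the analogous pairing in the upper half ($x_n=1$) joins $2^{n-1}+a$ to $2^{n-1}+(2^{n-1}-1-a)$. Thus $M$ is exactly the $2^{n-2}$ antipodal pairs of the lower $(n-1)$-cube together with the $2^{n-2}$ antipodal pairs of the upper one. Counting the pairs lying wholly in $S_m$ now becomes elementary: a lower-half pair $\{x,2^{n-1}-1-x\}$ is contained in $S_m$ precisely when its larger element satisfies $2^{n-1}-1-x<m$, i.e. $x\ge 2^{n-1}-m$, and a shifted upper-half pair is contained in $S_m$ precisely when $2^{n-1}+(2^{n-1}-1-a)<m$.

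Carrying out the four ranges of $m$ then yields the stated formula. For $1\le m\le 2^{n-2}$ the requirement $x\ge 2^{n-1}-m> 2^{n-2}-1$ is incompatible with $x\le 2^{n-2}-1$, so $M\cap E(Q_{n,2}[L_m^n])=\emptyset$ and $ex_m(Q_{n,2})=ex_m(Q_n)$. For $2^{n-2}<m\le 2^{n-1}$ exactly the indices $2^{n-1}-m\le x\le 2^{n-2}-1$ survive, giving $m-2^{n-2}$ matching edges and the correction $2m-2^{n-1}$. Once $m>2^{n-1}$ the entire lower half is absorbed, contributing all $2^{n-2}$ of its matching edges (this is the term $\lfloor m/2^{n-1}\rfloor 2^{n-1}=2^{n-1}$), while the partial top segment of size $x=m-2^{n-1}$ contributes $[x-2^{n-2}]^+$ further pairs by the same threshold argument, namely $0$ when $0\le x<2^{n-2}$ and $x-2^{n-2}$ when $2^{n-2}\le x<2^{n-1}$. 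Doubling each count and adding to $ex_m(Q_n)$ reproduces the four displayed lines and the compact floor-plus-bracket expression.

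Finally I would upgrade the edge count to the statement about $\xi_m$. Because $Q_{n,2}$ is $(n+1)$-regular, summing degrees over $L_m^n$ gives the double-counting identity $\lvert[L_m^n,\overline{L_m^n}]_{Q_{n,2}}\rvert=(n+1)m-2\lvert E(Q_{n,2}[L_m^n])\rvert=(n+1)m-ex_m(Q_{n,2})$; since both $Q_{n,2}[L_m^n]$ and $Q_{n,2}[\overline{L_m^n}]$ are connected (recorded in the excerpt via \cite{ref28}), $L_m^n$ is an admissible witness in the modified definition of $\xi_m$, whence $\xi_m(Q_{n,2})=(n+1)m-ex_m(Q_{n,2})$. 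I expect the main obstacle to be not the arithmetic but the optimality input: one must know that no $m$-set beats $L_m^n$, even though $L_m^n$ is tuned to maximise the hypercube edges and is in general \emph{not} optimal for $M$ alone (for instance $L_2^n=\{0,1\}$ contains no matching edge, whereas the antipodal pair $\{0,2^{n-1}-1\}$ contains one, so there is a genuine trade-off between the two edge classes). If one wished to avoid citing \cite{ref31}, this optimality would have to be re-established by a compression argument showing that shifting any extremal set toward the initial segment never decreases $\lvert E(Q_n[\,\cdot\,])\rvert+\lvert M\cap E(\,\cdot\,)\rvert$.
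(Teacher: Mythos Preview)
The paper does not prove this lemma at all: it is stated with citations to \cite{ref31,ref36} and then illustrated by worked examples, so there is nothing to compare your argument against line by line. Your proposal is correct and in fact supplies precisely the derivation the paper suppresses. The decomposition $E(Q_{n,2})=E(Q_n)\,\dot\cup\,M$, the decimal description of the $2$-complementary matching as $x\leftrightarrow 2^{n-1}-1-x$ within each half, and the threshold count of pairs captured by an initial segment are all accurate and reproduce the four cases exactly. You are also right to flag that the only non-routine ingredient is the optimality of $L_m^n$ for $Q_{n,2}$ (the equality $ex_m(Q_{n,2})=2\lvert E(Q_{n,2}[L_m^n])\rvert$), which the paper, like you, takes from \cite{ref31}; your closing remark that $L_m^n$ is not simultaneously optimal for $M$ alone is a nice way to stress that this step genuinely requires the cited compression argument and is not a free consequence of the hypercube case.
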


Then several specific examples are used to illustrate the calculation of $ex_m(Q_{n,2})$. For example, for $n=4$ and $m=4$, $ex_{m}(Q_{n,2})=2|E(Q_{n,2}[L_{m}^{n}])|=\sum_{i=0}^{s} t_{i} 2^{t_{i}}+\sum_{i=0}^{s} 2 i 2^{t_{i}}$. Note that $S_{4}=\{0, 1,2, 3\}$ and $L_{4}^{4}=\{0000, 0001, 0010, 0011\}$. Since $4=2^{2}$, it can be seen that $t_{0}=2$ and {$e x_{4}(Q_{4,2})=2\lvert E(Q_{n,2}[L_{4}^{4}])\rvert=2 \times 2^{2}+2 \times 0 \times 2^{2}=8$; for $n=4$ and $m=8$, $ex_{m}(Q_{n,2})=\sum_{i=0}^{s} t_{i} 2^{t_{i}}+\sum_{i=0}^{s} 2 i 2^{t_{i}}+2m-2^{n-1}$. There are $S_{8}=\{0, 1, \ldots, 7\}$ and $L_{8}^{4}=\{0000, 0001, 0010, 0011, 0100, 0101, 0110, 0111\}$. Since $8=2^{3}$, it can be  obtained that $t_{0}=3$ and $e x_{8}(Q_{4,2})=2\lvert E(Q_{n,2}[L_{8}^{4}])\rvert=3 \times 2^{3}+2 \times 0 \times 2^{3}+2 \times 8-2^{3}=32$. The induced subgraphs $Q_{4,2}[L_{4}^{4}]$ and $Q_{4,2}[L_{8}^{4}]$ are shown in Fig.~\ref{figure4}.

\begin{figure}[t]
\centering
\begin{minipage}[t]{0.4\textwidth}
\vspace{11pt}
\centerline{
\includegraphics[height=3.5cm]{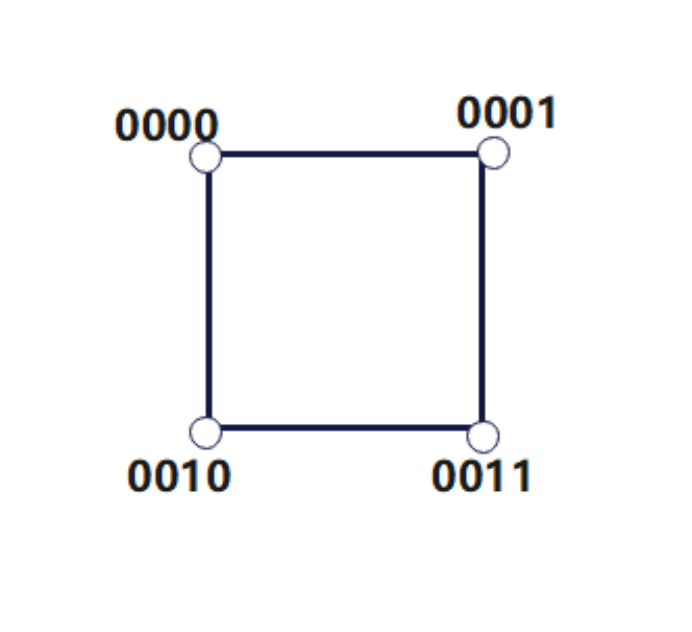}}
\end{minipage}
\begin{minipage}[t]{0.4\textwidth}
\vspace{4pt}
\centerline{
\includegraphics[height=3.5cm]{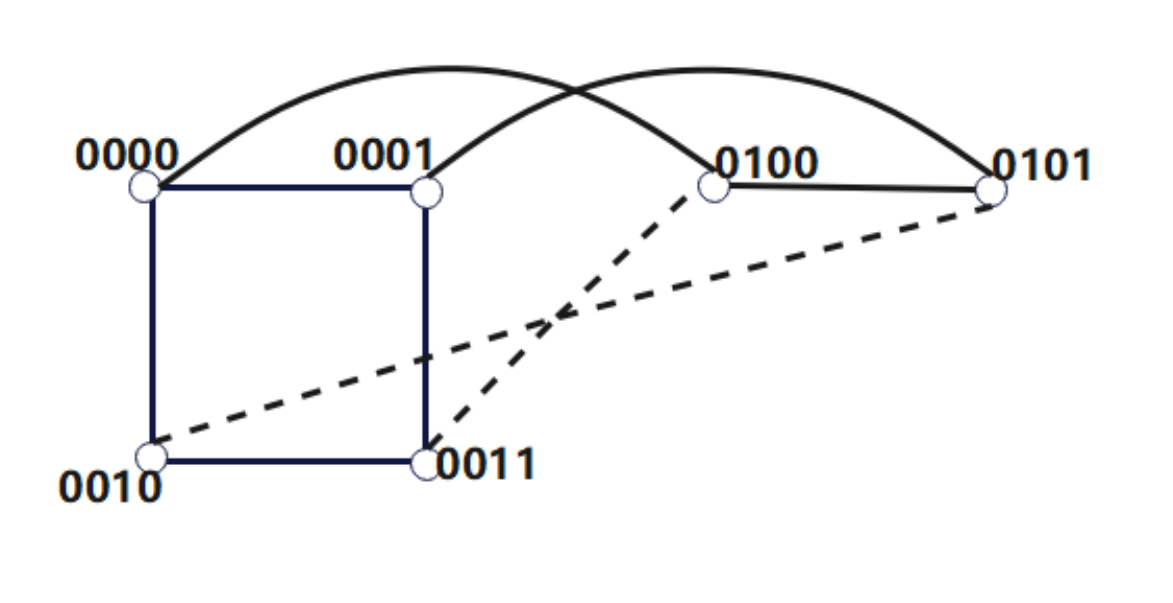}}
\end{minipage}
\begin{minipage}[t]{0.4\textwidth}
\vspace{4pt}
\centerline{
\includegraphics[height=3.5cm]{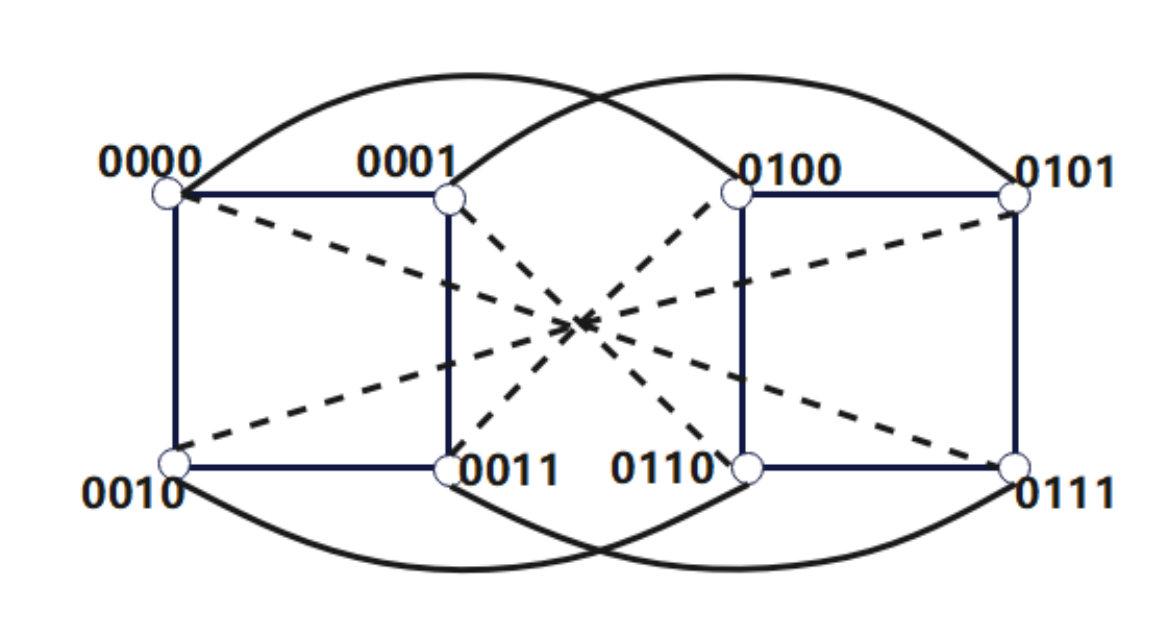}}
\end{minipage}
\caption{Induced subgraphs $Q_{4,2}[L_{4}^{4}]$, $Q_{4,2}[L_{6}^{4}]$ and $Q_{4,2}[L_{8}^{4}]$.}
\label{figure4}
\end{figure}

\begin{lemma}\label{s20}(\cite{ref36})
For positive integers $1 \leq m \leq 2^{t}$ and $ 0\leq t\leq n$, $ex_{m}(Q_{n}) \leq tm$ and $ex_{m}(Q_{n,k}) \leq (t + 1)m$.
\end{lemma}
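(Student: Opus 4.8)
The plan is to prove both inequalities by the same elementary device: bound the number of edges of the extremal induced subgraph by a \emph{degree count}, after observing that the canonical extremal set $L_m^n$ is contained in a $t$-dimensional subcube whenever $m \le 2^t$. Recall that $ex_m(Q_n) = 2|E(Q_n[L_m^n])|$ by Lemma~\ref{s1}, and likewise $ex_m(Q_{n,k}) = 2|E(Q_{n,k}[L_m^n])|$ by the optimality of the first-$m$-vertices set $L_m^n$ (Lemma~\ref{s3} for $k=2$, and Arockiaraj et al.~\cite{ref31} in general); hence it suffices to bound the degrees of the vertices of these two induced subgraphs.

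First I would treat $ex_m(Q_n)$. Since $m \le 2^t$, every element of $S_m = \{0,1,\dots,m-1\}$ is, in decimal, strictly smaller than $2^t$, so in $n$-bit form each vertex of $L_m^n$ has $x_{t+1} = \cdots = x_n = 0$. Thus $L_m^n$ lies inside the subcube $W = \{x \in V(Q_n) : x_{t+1} = \cdots = x_n = 0\} \cong Q_t$. A hypercube edge incident to such a vertex that flips one of the coordinates $t+1,\dots,n$ leaves $W$, and therefore leaves $L_m^n$; only the flips of the $t$ coordinates $1,\dots,t$ can stay inside. Consequently every vertex of $Q_n[L_m^n]$ has degree at most $t$, and the handshake identity gives $ex_m(Q_n) = \sum_{v \in L_m^n} \deg_{Q_n[L_m^n]}(v) \le tm$.

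For $ex_m(Q_{n,k})$ I would reuse the hypercube-edge bound of the previous paragraph and account for the $k$-complementary edges separately. The defining condition (2) of $Q_{n,k}$ assigns to each vertex $x$ exactly one $k$-complementary partner, and this assignment is an involution, so the complementary edges form a perfect matching of $Q_{n,k}$. Hence in the induced subgraph $Q_{n,k}[L_m^n]$ every vertex is incident to at most one complementary edge. Adding this to the hypercube contribution, each vertex of $Q_{n,k}[L_m^n]$ has degree at most $t + 1$, whence $ex_m(Q_{n,k}) = \sum_{v\in L_m^n} \deg_{Q_{n,k}[L_m^n]}(v) \le (t+1)m$. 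The endpoint checks $t=n$ (giving $nm$ and $(n+1)m$, consistent with the $n$- and $(n+1)$-regularity) and $t=0$, $m=1$ (giving $0$ and $\le 1$) confirm the bounds have the correct shape.

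The only genuine input beyond elementary counting is the optimality of $L_m^n$, i.e.\ that $L_m^n$ actually realises the maximum edge count defining $ex_m$; this is exactly what Lemma~\ref{s1} and Lemma~\ref{s3}/\cite{ref31} supply, so no edge-isoperimetric inequality has to be reproved here. I therefore expect no serious obstacle: the subcube containment is immediate from $m \le 2^t$, and the ``$+1$'' for $Q_{n,k}$ is forced by the single complementary edge at each vertex. The one point demanding a little care is to state the matching/involution property of the complementary edges cleanly for general $k$ (not merely $k=2$), since the degree bound hinges precisely on there being at most one such edge incident to each vertex.
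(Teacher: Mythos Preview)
Your argument is correct. The paper does not actually prove this lemma: it is quoted verbatim from \cite{ref36} and used as a black box, so there is no ``paper's own proof'' to compare against. Your degree-counting proof is sound; in particular the key steps---$L_m^n \subseteq \{x : x_{t+1}=\cdots=x_n=0\}$ whenever $m\le 2^t$, and the $k$-complementary edges forming a perfect matching for every $1\le k\le n-1$---are both correct, and combined with the optimality of $L_m^n$ (Lemmas~\ref{s1} and~\ref{s3}) they immediately yield the two bounds.
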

\begin{lemma}\label{s5}(\cite{ref36})
For positive integers $h \leq m =\sum_{i=0}^{s}2^{t_{i}}\leq 2^{n-1},$
$$\lambda_{h}(Q_{n,2})=\min \left\{\xi_{m}(Q_{n,2}): h \leq m \leq 2^{n-1}\right\},$$
satisfying that
\begin{equation}
\begin{aligned}\label{formula5}
\xi_{m}(Q_{n,2})=(n+1) m-ex_{m}(Q_{n,2}).
\end{aligned}
\end{equation}
\end{lemma}
For $m \leq 2^{n-1},$ the following two iterative properties of the expression of $ex_m(Q_{n,2})$ depend on whether $Q_{n,2}$ matches complementary edges in the sub-network and the number of such complementary edges.
\begin{lemma}\label{s8}
Let $m, n$ be two integers, $n\geq 4, 1\leq m=\sum_{i=0}^{s}2^{t_{i}}\leq 2^{n-1}$. For $m_1=\sum_{i=0}^{a}2^{t_{i}},$  $m=m_{1}+m_{2}$, and $t_0>t_1 \cdot\cdot\cdot >t_a>t_{a+1}>t_{a+2}> \cdot\cdot\cdot >t_s\ge0, a < s$,
\begin{itemize}
\item[(a)] $ex_{m}(Q_{n,2})=ex_{m_{1}}(Q_{n,2})+e x_{m_{2}}(Q_{n,2})+2(a+1) m_{2}$ ~for ~$1 \leq m \leq 2^{n-2}$;
\item[(b)] $ex_{m}(Q_{n,2})=e x_{m_{1}}(Q_{n,2})+e x_{m_{2}}(Q_{n,2})+2(a+2)m_{2}$ ~for ~$2^{n-2}<m\leq 2^{n-1}$.
\end{itemize}
\end{lemma}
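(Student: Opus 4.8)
The plan is to prove both identities by reducing, through the piecewise description of $ex_m(Q_{n,2})$ in Lemma \ref{s3}, to the pure hypercube quantity $ex(Q_n)$ plus a term that counts the $2$-complementary edges, and then to split each contribution according to $m=m_1+m_2$. The combinatorial backbone is the block decomposition of the initial segment $L_m^n=\{0,1,\dots,m-1\}$ into the lower block $L_{m_1}^n=\{0,\dots,m_1-1\}$ and the upper block $B=\{m_1,\dots,m-1\}$. Since the lowest set bit of $m_1$ is $2^{t_a}$ while $m_2=\sum_{i=a+1}^{s}2^{t_i}<2^{t_a}$, adding any element of $\{0,\dots,m_2-1\}$ to $m_1$ causes no carry; hence $x\mapsto m_1+x$ is a bitwise isomorphism $L_{m_2}^n\to B$ and $Q_n[B]\cong Q_n[L_{m_2}^n]$. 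I would therefore count edges block by block — internal edges of each block together with the edges joining the two blocks — handling the hypercube edges and the complementary edges separately.

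For part (a), $m\le 2^{n-2}$ forces $m_1,m_2\le 2^{n-2}$, so the first branch of Lemma \ref{s3} gives $ex_\bullet(Q_{n,2})=ex_\bullet(Q_n)$ for each of $m,m_1,m_2$, and no complementary edge lies inside $L_m^n$ (the endpoints of any complementary pair sum to $2^{n-1}-1$, so at most one of them can lie in $\{0,\dots,2^{n-2}-1\}$). Thus (a) is exactly the hypercube decomposition $ex_m(Q_n)=ex_{m_1}(Q_n)+ex_{m_2}(Q_n)+2(a+1)m_2$. I would verify this by substituting the closed form of Lemma \ref{s1}(1): the $\sum t_i2^{t_i}$ parts agree termwise, and the only discrepancy lies in the position index of $\sum 2i\,2^{t_i}$, because in $ex_{m_2}(Q_n)$ the low bits $t_{a+1},\dots,t_s$ are re-indexed from $0$ whereas inside $ex_m(Q_n)$ they carry the indices $a+1,\dots,s$. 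The uniform shift by $a+1$ on these bits produces precisely $\sum_{i=a+1}^{s}2(a+1)2^{t_i}=2(a+1)m_2$, which is also the doubled number of hypercube edges joining $L_{m_1}^n$ to $B$.

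For part (b), the hypothesis $2^{n-2}<m\le 2^{n-1}$ together with $a<s$ forces $t_0=n-2$, so $m_1\ge 2^{n-2}$ and $m_2=m-m_1<2^{n-2}$. Lemma \ref{s3} then yields $ex_{m_2}(Q_{n,2})=ex_{m_2}(Q_n)$, $ex_m(Q_{n,2})=ex_m(Q_n)+2m-2^{n-1}$, and $ex_{m_1}(Q_{n,2})=ex_{m_1}(Q_n)+[\,2m_1-2^{n-1}\,]^{+}$. Substituting the hypercube decomposition $ex_m(Q_n)=ex_{m_1}(Q_n)+ex_{m_2}(Q_n)+2(a+1)m_2$ (established in part (a), its closed-form proof using no upper bound on $m$) isolates the complementary contribution, so the identity reduces to a count of complementary edges. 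Among the pairs $\{j,2^{n-1}-1-j\}$, the segment $L_m^n$ contains $m-2^{n-2}$ of them and $L_{m_1}^n$ contains $[\,m_1-2^{n-2}\,]^{+}$, while the upper block $B$ contains none (both endpoints of such a pair would have to exceed $2^{n-2}-1$ yet sum to $2^{n-1}-1$). Feeding these counts, together with the hypercube cross term $2(a+1)m_2$, into Lemma \ref{s3} collapses the expression to the additive formula of (b); the boundary $m_1=2^{n-2}$ (that is, $a=0$) versus $m_1>2^{n-2}$ must be treated separately, since the bracket $[\,2m_1-2^{n-1}\,]^{+}$ changes branch there.

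The step I expect to be the main obstacle is the complementary-edge bookkeeping in part (b): establishing that the upper block $B$ carries no complementary edge, and that the number of complementary edges crossing between $L_{m_1}^n$ and $B$ is given by the same expression in both the $a=0$ and $a\ge 1$ branches, so that the two sub-cases recombine into one formula. The index-shift identity underlying part (a) is routine once the re-indexing of $m_2$ is written out, and all remaining steps are direct substitutions into Lemmas \ref{s1} and \ref{s3}.
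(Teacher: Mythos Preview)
Your approach is essentially the paper's: both proofs reduce via the piecewise formula of Lemma~\ref{s3} to the hypercube quantity and then use the index-shift identity on the closed form from Lemma~\ref{s1}(1); your block-decomposition and complementary-edge interpretation are correct additional colour but not needed for the algebra.

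Two remarks. First, the sub-case split you anticipate ($a=0$ versus $a\ge1$) is unnecessary: since $t_0=n-2$ forces $m_1\ge 2^{n-2}$, and at the boundary $m_1=2^{n-2}$ one has $2m_1-2^{n-1}=0$, the paper simply writes $ex_{m_1}(Q_{n,2})=ex_{m_1}(Q_n)+2m_1-2^{n-1}$ uniformly and the two branches coalesce. Second, when you actually carry out the substitution you outline for part~(b), you will obtain
\[
ex_m(Q_{n,2})=ex_{m_1}(Q_{n,2})+ex_{m_2}(Q_{n,2})+2(a+2)m_2,
\]
which is exactly what the paper's own computation produces in its final line. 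This differs from the displayed statement $2m_1+2(a+1)m_2$; the discrepancy is a typo in the lemma's statement (the ``$2m_1$'' should be ``$2m_2$''), not a flaw in your argument.
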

\begin{proof}
Note that $m_{2}=m-m_{1}=2^{t_{a+1}}+2^{t_{a+2}}+\cdots+2^{t_s}=\sum_{i=a+1}^{s} 2^{t_{i}}=\sum\nolimits_{i=0}^{s-a-1} 2^{t_{i+a+1}}$. Since the expression of $ex_m(Q_{n,2})$ strongly depends on the binary decomposition of $m$ and the domain of $m$, it can be divided into the following two cases according to its two different forms.
\begin{itemize}
\item[(a)] For $1 \leq m \leq 2^{n-2}$, by Lemma~\ref{s3}, it can be obtained
$$\textstyle{ex_{m_{1}}(Q_{n,2})=\sum_{i=0}^{a}t_{i}2^{t_{i}}+\sum\nolimits_{i=0}^{a}2i2^{t_{i}}}$$ and
$$\textstyle{ex_{m_{2}}(Q_{n,2})=\sum_{i=0}^{s-a-1}t_{i+a+1}2^{t_{i+a+1}}+\sum\nolimits_{i=0}^{s-a-1}2i2^{t_{i+a+1}}}.$$
Note that
\begin{align*}
 e x_{m}(Q_{n,2}) &=\textstyle{\sum\nolimits_{i=0}^{s} t_{i} 2^{t_{i}}+\sum\nolimits_{i=0}^{s} 2 i 2^{t_{i}}} \\
&=\textstyle{(\sum\nolimits_{i=0}^{a} t_{i} 2^{t_{i}}+\sum\nolimits_{i=0}^{s-a-1}t_{i+a+1} 2^{t_{i+a+1}})}\\
&\hspace{3cm}\textstyle{+(\sum\nolimits_{i=0}^{a} 2 i 2^{t_{i}}+\sum\nolimits_{i=0}^{s-a-1} 2(a+1+i)2^{t_{i+a+1}})} \\
&=\textstyle{e x_{m_{1}}(Q_{n,2})+e x_{m_{2}}(Q_{n,2})+2\sum\nolimits_{i=0}^{s-a-1}(a+1)2^{t_{i+a+1}}} \\
&=e x_{m_{1}}(Q_{n,2})+e x_{m_{2}}(Q_{n,2})+2(a+1)m_{2}.
\end{align*}
\item[(b)] For $2^{n-2}< m \leq 2^{n-1}$, by  Lemma~\ref{s3}, it is sufficient to show that
$$\textstyle{ex_{m_{1}}(Q_{n,2})=\sum_{i=0}^{a}t_{i}2^{t_{i}}+\sum\nolimits_{i=0}^{a}2i2^{t_{i}}+2m_1-2^{n-1}}$$ and
$$\textstyle{ex_{m_{2}}(Q_{n,2})=\sum_{i=0}^{s-a-1}t_{i+a+1}2^{t_{i+a+1}}+\sum\nolimits_{i=0}^{s-a-1}2i2^{t_{i+a+1}}}.$$
Note that
\begin{align*}
ex_{m}(Q_{n,2}) &= \textstyle{\sum\nolimits_{i=0}^{s} t_{i} 2^{t_{i}}+\sum\nolimits_{i=0}^{s}2 i 2^{t_{i}}+2m-2^{n-1}}\\
&=\textstyle{(\sum\nolimits_{i=0}^{a} t_{i}2^{t_{i}}+\sum\nolimits_{i=0}^{s-a-1}t_{i+a+1}2^{t_{i+a+1}})} \\
&\hspace{3cm}\textstyle{+(\sum\nolimits_{i=0}^{a} 2 i 2^{t_{i}}+\sum\nolimits_{i=0}^{s-a-1} 2(a+1+i)2^{t_{i+a+1}})+2m-2^{n-1}} \\
&=\textstyle{e x_{m_{1}}(Q_{n,2})+e x_{m_{2}}(Q_{n,2})+2m_{2}+2\sum\nolimits_{i=0}^{s-a-1}(a+1)2^{t_{i+a+1}}}\\
&=e x_{m_{1}}(Q_{n,2})+e x_{m_{2}}(Q_{n,2})+2(a+2)m_{2}.
\end{align*}
\end{itemize}
To sum up, the proof is completed.
\end{proof}
\section{Some properties of the function $\xi_{m}(Q_{n,2})$}\label{sec3}
The exact value of the function $\lambda_h(Q_{n,2})$ highly depends on the monotonic intervals and fractal structure of the function $\xi_m(Q_{n,2})$. Then we introduce several lemmas to describe the properties of the function $\xi_m(Q_{n,2})$.

Let $f=0$ if $n$ is even, and $f=1$ if $n$ is odd.
To deal with the interval $\left\lceil\frac{11\times2^{n-1}}{48}\right\rceil\leq m \leq 2^{n-1}$, by inserting $\lceil\frac{n}{2}\rceil-1$ numbers of $m_{n, r}$ satisfying
$$\textstyle{\left\lceil\frac{11\times2^{n-1}}{48}\right\rceil=m_{n,1}<m_{n,2}<\cdot\cdot\cdot<m_{n,r}<m_{n,r+1}<\cdot\cdot\cdot<m_{n,\lceil\frac{n}{2}\rceil-1}=2^{n-1}.}$$
  This interval is divided into $\lceil\frac{n}{2}\rceil-1$ numbers of integer subintervals. The expression of $m_{n, r}$ is defined as follows:
\begin{small}
\begin{equation}
m_{n,r}=\left\{\begin{array}{ll}
\sum\nolimits_{i=0}^{2} 2^{n-4-i}+\sum\nolimits_{i=0}^{\lceil\frac{n}{2}\rceil-4-r} 2^{n-8-2i}+2^{2r-1-f}&\text { if } 1\leq r\leq\lceil\frac {n}{2}\rceil-4;\notag(e)\\
\sum\nolimits_{i=0}^{3} 2^{n-4-i} & \text { if } r=\lceil\frac{n}{2}\rceil-3;\notag(f)\\
2^{n-3} & \text { if } r=\lceil\frac{n}{2}\rceil-2;\notag(g)\\
2^{n-1} & \text { if } r=\lceil\frac{n}{2}\rceil-1,\notag(h)
\end{array}\right.\notag
\end{equation}
\end{small}
for $r=1, 2, \cdots, \lceil\frac{n}{2}\rceil-1.$ By calculation, it can be obtained that
$$\textstyle{\left\lceil\frac{11\times2^{n-1}}{48}\right\rceil=m_{n,1}=\sum\nolimits_{i=0}^{2} 2^{n-4-i}+\sum\nolimits_{i=0}^{\lceil\frac{n}{2}\rceil-5} 2^{n-8-2i}+2^{1-f}}.$$
Actually, if $1 \leq r \leq \lceil\frac{n}{2}\rceil-4$ and $n$ is even, $m_{n,r}=\sum\nolimits_{i=0}^{2} 2^{n-4-i}+\sum\nolimits_{i=0}^{\lceil\frac{n}{2}\rceil-4-r} 2^{n-8-2i}+2^{2r-1}$.  $m_{n, 1}=2^{n-4}+2^{n-5}+2^{n-6}+2^{n-8}+2^{n-10}+\cdots+2^{2}+2^{1}$ and $3m_{n,1}=2m_{n,1}+m_{n,1}=2^{n-3}+2^{n-4}+2^{n-5}+2^{n-6}+2^{n-7}+\cdots+2^{3}+2^{2}+2^{1}+(2^{n-4}+2^{n-5}+2^{2})$, so $m_{n, 1}=\frac{11\times 2^{n-5}+2^{1-f}}{3}=\lceil\frac {11\times2^{n-1}}{48}\rceil$. If $n$ is odd,  $m_{n,r}=\sum\nolimits_{i=0}^{2} 2^{n-4-i}+\sum\nolimits_{i=0}^{\lceil\frac{n}{2}\rceil-4-r} 2^{n-8-2i}+2^{2r-2}$. $m_{n, 1}=2^{n-4}+2^{n-5}+2^{n-6}+2^{n-8}+2^{n-10}+\cdots+2^{1}+2^{0}$ and $3m_{n,1}=m_{n, 1}+2m_{n, 1}=2^{n-3}+2^{n-4}+2^{n-5}+2^{n-6}+2^{n-7}+\cdots+2^{2}+2^{1}++2^{0}+(2^{n-4}+2^{n-5}+2^{1})$, thus $m_{n, 1}=\frac{11\times 2^{n-5}+2^{1-f}}{3}=\lceil\frac {11\times2^{n-1}}{48}\rceil$.

Since, in the small-scale cases for $4\le n\le 8$, not all four anticipated scenarios occur (as detailed in Table~\ref{tab2}), this paper focuses primarily on the cases for $n\ge9$, and provides examples of the variables $r$ and $m_{n,r}$ for $n=9$ and $n=10$ (see Table~\ref{tab3}).

\begin{table}[t]
\caption{The variables of $r$, and $m_{n,r}$ for $4\leq n \leq8$.}\label{tab2}
\centering
\begin{tabular}{lllllllll}
\hline
$n$     &$m_{n,1}$   &$m_{n,2}$   &$m_{n,3}$ &$\cdot\cdot\cdot$   &$m_{n,\lceil\frac{n}{2}\rceil-1}$   \\
\hline
$4$     &$1, (h)$          &          &    &  &  & \\[-1pt]
$5$    &$4, (g)$  & $16, (h)$  &     &           &     \\[-1pt]
$6$     &$8, (g)$  &$32, (h)$ &  &     &    \\[-1pt]
$7$     &$15, (g)$  &$16, (h)$   &$64, (f)$     &           &    \\[-1pt]
$8$    &$30, (g)$ &$32, (h)$    &$128, (f)$  &  &\\[-1pt]
\hline
\end{tabular}
\end{table}

\begin{table}[h]
\caption{The variables of $r$, and $m_{n,r}$ for $n = 9$ or $10.$}\label{tab3}
\centering
\begin{tabular}{cccccccc}
\hline
&$n=9$ &                                         & $n=10$ &         &                         \\[-1pt] \hline
$r$   & $m_{n,r}$                       &     &$ m_{n,r}$                  \\[-1pt] \hline
$1$        & $59=2^{5}+2^{4}+2^{3}+2^{1}+2^{0}$ &         & $118=2^{6}+2^{5}+2^{4}+2^{2}+2^{1}$ \\[-1pt]
$2$        & $60=2^{5}+2^{4}+2^{3}+2^{2}$       &        & $120=2^{6}+2^{5}+2^{4}+2^{3}$ \\[-1pt]
$3$         & $64=2^{6}$                         &         & $128=2^{7}$                   \\[-1pt]
$4$        & $256=2^{8} $                        &           & $512=2^{9}$                   \\[-1pt] \hline
\end{tabular}
\end{table}

\begin{lemma}\label{s15} \cite{ref36} Let $c, n$ and $m$ be three integers, $n\geq 4, 0 \leq c \leq n-2$ and $2^{c}\leq m \leq 2^{n-1}$. Then $\xi_{m}(Q_{n,2}) \geq \xi_{2^{c}}(Q_{n,2})$.
\end{lemma}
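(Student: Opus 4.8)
The plan is to reduce every quantity to the ordinary hypercube $Q_n$ and then split the range of $m$ at the threshold $2^{n-2}$, where the contribution of the $2$-complementary edges to $ex_m(Q_{n,2})$ switches on. Using $\xi_m(Q_{n,2})=(n+1)m-ex_m(Q_{n,2})$ and $\xi_m(Q_n)=nm-ex_m(Q_n)$ together with the piecewise formula in Lemma \ref{s3}, I would first record the two identities
\[
\xi_m(Q_{n,2})=\begin{cases}\xi_m(Q_n)+m & \text{if } 2^c\le m\le 2^{n-2},\\[2pt] \xi_m(Q_n)+2^{n-1}-m & \text{if } 2^{n-2}<m\le 2^{n-1}.\end{cases}
\]
Since $c\le n-2$ forces $2^c\le 2^{n-2}$, Lemma \ref{s3} also gives the target value $\xi_{2^c}(Q_{n,2})=(n+1)2^c-c\,2^c=(n+1-c)2^c$. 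It then suffices to bound $\xi_m(Q_{n,2})$ from below by $(n+1-c)2^c$ separately on the two ranges.

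For the upper range $2^{n-2}<m\le 2^{n-1}$ I would first establish the uniform estimate $\xi_m(Q_n)\ge m$: by Lemma \ref{s20} with $t=n-1$ one has $ex_m(Q_n)\le (n-1)m$ for every $m\le 2^{n-1}$, hence $\xi_m(Q_n)=nm-ex_m(Q_n)\ge m$. Substituting into the second identity yields $\xi_m(Q_{n,2})\ge m+2^{n-1}-m=2^{n-1}$. Consequently the desired inequality reduces on this range to the single endpoint comparison $(n+1-c)2^c\le 2^{n-1}$, which is tight near the top of the range of $c$ (equality occurs at $c=n-3$) and is the genuine crux of the upper range.

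For the lower range $2^c\le m\le 2^{n-2}$ the first identity converts the goal $\xi_m(Q_{n,2})\ge (n+1-c)2^c$ into $\xi_m(Q_n)+m\ge (n+1-c)2^c$; since $m\ge 2^c$, it is enough to prove the hypercube monotonicity $\xi_m(Q_n)\ge\xi_{2^c}(Q_n)=(n-c)2^c$. I would derive this from the closed form of Lemma \ref{s1}(1): writing $m=\sum_{i=0}^s 2^{t_i}$ with $t_0>\cdots>t_s\ge0$ and $c\le t_0\le n-1$, one has $\xi_m(Q_n)=\sum_{i=0}^s(n-t_i-2i)2^{t_i}$. The leading term already satisfies $(n-t_0)2^{t_0}\ge (n-c)2^c$, because the single-block value $(n-t)2^t$ is non-decreasing in $t$ on $\{c,\dots,n-1\}$; it then remains only to show that the lower-order tail $\sum_{i\ge1}(n-t_i-2i)2^{t_i}$ is nonnegative.

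This last point is where I expect the real work to lie: the coefficients $n-t_i-2i$ become negative for large $i$, so the nonnegativity of $\sum_{i\ge1}(n-t_i-2i)2^{t_i}$ is not termwise. I would handle it by induction on the number of binary digits $s$, peeling the lowest bit $2^{t_s}$ and comparing against the worst-case configuration of consecutive exponents $t_i=t_0-i$, exploiting that the geometric weighting $2^{t_i}$ lets the positive low-index contributions dominate the negative high-index ones. Combining this hypercube monotonicity on the lower range with the endpoint inequality $(n+1-c)2^c\le 2^{n-1}$ on the upper range then gives $\xi_m(Q_{n,2})\ge\xi_{2^c}(Q_{n,2})$ throughout $2^c\le m\le 2^{n-1}$, as claimed.
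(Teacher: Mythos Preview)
The paper does not prove this lemma---it is quoted from \cite{ref36} without argument---so there is no in-paper proof to compare against. Your reduction via the identities $\xi_m(Q_{n,2})=\xi_m(Q_n)+m$ for $m\le 2^{n-2}$ and $\xi_m(Q_{n,2})=\xi_m(Q_n)+2^{n-1}-m$ for $2^{n-2}<m\le 2^{n-1}$ is correct and is the natural line of attack.

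The upper-range step, however, contains a real gap: the endpoint inequality $(n+1-c)2^c\le 2^{n-1}$ that you call ``the genuine crux'' \emph{fails} at $c=n-2$, where the left side equals $3\cdot 2^{n-2}>2^{n-1}$. This is not a repairable oversight in your argument---the lemma as printed is simply false at that endpoint. For $n=4$, $c=2$, $m=8$ the paper's own Table~4 gives $\xi_8(Q_{4,2})=8<12=\xi_4(Q_{4,2})$. The correct hypothesis is $0\le c\le n-3$; under that restriction your argument (and your parenthetical remark that equality holds at $c=n-3$) is sound, and this restricted range is also all that the paper actually uses when it invokes the lemma. For the lower-range tail nonnegativity you sketch an induction on the number of binary digits, but a one-line route is already available from the paper's toolkit: writing $m=2^{t_0}+p$ with $0\le p<2^{t_0}\le 2^{n-2}$, the exact splitting $ex_m(Q_n)=t_0 2^{t_0}+ex_p(Q_n)+2p$ (Lemma~\ref{s1}(1) applied to disjoint binary supports) gives $\xi_m(Q_n)-\xi_{2^{t_0}}(Q_n)=(n-2)p-ex_p(Q_n)$, and Lemma~\ref{s20} with $t=n-2$ yields $ex_p(Q_n)\le (n-2)p$, so the tail is nonnegative without any induction.
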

\begin{lemma}\label{s9}
Let $n, r$ be two integers, $n\geq9,$ $r=1, 2, \cdots, \lceil\frac{n}{2}\rceil-1$. Then $\xi_{m_{n,r}}(Q_{n,2})=2^{n-1}$.
\end{lemma}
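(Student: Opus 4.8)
The plan is to evaluate $\xi_{m_{n,r}}(Q_{n,2})$ directly from the identity $\xi_{m}(Q_{n,2})=(n+1)m-ex_{m}(Q_{n,2})$ of Lemma~\ref{s5}, handling the four pieces $(e)$--$(h)$ in the definition of $m_{n,r}$ separately. First I would record the placement of $m_{n,r}$ relative to $2^{n-2}$: in cases $(e)$ and $(f)$ the leading binary digit of $m_{n,r}$ is $2^{n-4}$, so $m_{n,r}<2^{n-4}+2^{n-4}=2^{n-3}$; in $(g)$ one has $m_{n,r}=2^{n-3}$; in $(h)$ one has $m_{n,r}=2^{n-1}$. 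Consequently the first branch of Lemma~\ref{s3} applies in cases $(e),(f),(g)$, giving $ex_{m_{n,r}}(Q_{n,2})=ex_{m_{n,r}}(Q_{n})$, while case $(h)$ falls in the second branch, giving $ex_{2^{n-1}}(Q_{n,2})=ex_{2^{n-1}}(Q_{n})+2\cdot2^{n-1}-2^{n-1}=ex_{2^{n-1}}(Q_{n})+2^{n-1}$. In every case $ex_{\cdot}(Q_{n})$ is then read off from the Harary--Li--Yang formula of Lemma~\ref{s1}(1).

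Cases $(f)$, $(g)$, $(h)$ are short computations. For $(h)$, $ex_{2^{n-1}}(Q_{n})=(n-1)2^{n-1}$, hence $\xi_{2^{n-1}}(Q_{n,2})=(n+1)2^{n-1}-(n-1)2^{n-1}-2^{n-1}=2^{n-1}$. For $(g)$, $ex_{2^{n-3}}(Q_{n})=(n-3)2^{n-3}$, hence $\xi_{2^{n-3}}(Q_{n,2})=(n+1)2^{n-3}-(n-3)2^{n-3}=2^{n-1}$. For $(f)$, with $m=2^{n-4}+2^{n-5}+2^{n-6}+2^{n-7}$ the exponents are $t_i=n-4-i$ and the $2i$-corrections are $0,2,4,6$, so $ex_{m}(Q_{n})=(n-4)2^{n-4}+(n-3)2^{n-5}+(n-2)2^{n-6}+(n-1)2^{n-7}$, and $\xi_{m}(Q_{n,2})=5\cdot2^{n-4}+4\cdot2^{n-5}+3\cdot2^{n-6}+2\cdot2^{n-7}=2^{n-1}$.

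The substantive case is $(e)$. Writing $f=0$ for $n$ even and $f=1$ for $n$ odd, I would first pin down the binary expansion $m_{n,r}=\sum_{i=0}^{s}2^{t_{i}}$: the exponents are $t_{0}=n-4$, $t_{1}=n-5$, $t_{2}=n-6$, then $t_{3+j}=n-8-2j$ for $0\le j\le\lceil n/2\rceil-4-r$ (whose smallest member is $2r$ if $n$ is even and $2r-1$ if $n$ is odd), and finally $t_{s}=2r-1-f$ with $s=\lceil n/2\rceil-r$; one checks these are strictly decreasing and nonnegative, which is exactly where $n\ge9$ (equivalently $\lceil n/2\rceil-4\ge1$) is used. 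By Lemma~\ref{s1}(1), $\xi_{m_{n,r}}(Q_{n,2})=\sum_{i=0}^{s}\bigl((n+1)-t_{i}-2i\bigr)2^{t_{i}}$, and a direct substitution shows the coefficients $(n+1)-t_{i}-2i$ are $5$ and $4$ for $i=0,1$, then $3$ for every $i$ with $2\le i\le s-1$, and $2$ for $i=s$. The key step is the telescoping of the coefficient-$3$ ``plateau'': since $\sum_{j=0}^{N}4^{j}=(4^{N+1}-1)/3$, one gets $3\bigl(2^{n-6}+2^{n-8}+\cdots+2^{t_{s-1}}\bigr)=2^{n-4}-2^{t_{s-1}}$, and since $t_{s}=t_{s-1}-1$ in both parities we have $2\cdot2^{t_{s}}=2^{t_{s-1}}$, so the terms $-2^{t_{s-1}}$ and $+2\cdot2^{t_{s}}$ cancel, leaving $\xi_{m_{n,r}}(Q_{n,2})=5\cdot2^{n-4}+4\cdot2^{n-5}+2^{n-4}=8\cdot2^{n-4}=2^{n-1}$. (The sub-case $r=\lceil n/2\rceil-4$, in which the plateau $\{n-6,n-8,\dots\}$ consists of just the two exponents $n-6,n-8$, is covered verbatim by this computation.)

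The main obstacle I anticipate is the bookkeeping in case $(e)$: correctly listing the exponents $t_{i}$ of $m_{n,r}$ (including the parity-dependent tail $2^{2r-1-f}$ and the short plateau when $r=\lceil n/2\rceil-4$), verifying $m_{n,r}<2^{n-3}\le2^{n-2}$ so that the unmodified formula $ex_{m_{n,r}}(Q_{n,2})=ex_{m_{n,r}}(Q_{n})$ applies, and confirming that $(n+1)-t_{i}-2i$ collapses to the claimed pattern $5,4,3,\dots,3,2$ of coefficients. Once these are in place, the telescoping identity finishes both the even and odd cases simultaneously.
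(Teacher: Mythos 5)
Your proposal is correct and follows essentially the same route as the paper: both split into the four branches $(e)$–$(h)$ of $m_{n,r}$, apply $\xi_m(Q_{n,2})=(n+1)m-ex_m(Q_{n,2})$ with the appropriate branch of the $ex_m(Q_{n,2})$ formula, and reduce case $(e)$ to the coefficient pattern $5,4,3,\dots,3,2$ whose coefficient-$3$ plateau telescopes geometrically against the tail term $2^{2r-1-f}$ — exactly the cancellation the paper performs (in the form $2^{n-4}-2^{2r-f}+2^{2r-f}$). Your write-up of case $(e)$ is in fact a cleaner, more explicit version of the paper's Case 1 computation.
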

\begin{proof}
According to different expressions of $m_{n,r}$, the proof will be divided into four cases.

\noindent
\textbf{Case 1}. For $ 1 \leq r\leq\lceil\frac{n}{2}\rceil-4$, $m_{n,r}=\sum\nolimits_{i=0}^{2} 2^{n-4-i}+\sum\nolimits_{i=0}^{\lceil\frac{n}{2}\rceil-4-r} 2^{n-8-2i}+2^{2r-1-f},$ by Lemma~\ref{s3} and formula~\eqref{formula5}, it can be obtained that
\begin{align*}
\xi_{m_{n,r}}(Q_{n,2}) &= (n+1)m_{n, r}-e x_{m_{n, r}}(Q_{n,2}) \\
&=\textstyle{(n+1)\left[\sum\nolimits_{i=0}^{2} 2^{n-4-i}+\sum\nolimits_{i=0}^{\lceil\frac{n}{2}\rceil-4-r} 2^{n-8-2 i}+2^{2 r-1-f}\right]}\\
&\hspace{2cm}\textstyle{-\Big\{\sum\nolimits_{i=0}^{ 2 }\left[(n-4-i)2^{n-4-i}+2 i 2^{n-4-i})\right]} \\
&\hspace{2.5cm}+\textstyle{\sum\nolimits_{i=0}^{\lceil\frac{n}{2}\rceil-4-r}\left[(n-8-2 i) 2^{n-8-2 i}+2(3+i) 2^{n-8-2 i}\right]}\\
&\hspace{2.5cm}+\textstyle{\left[(2r-1-f)2^{2 r-1-f}+2(\lceil\frac{n}{2}\rceil-r) 2^{2 r-1-f}\right]}\Big\} \\
&=\textstyle{(n+1-n+4-i)\sum\nolimits_{i=0}^{2}2^{n-4-i}}\\
&\hspace{2cm}\textstyle{+(n+1-n+2)\sum\nolimits_{i=0}^{\lceil\frac{n}{2}\rceil-4-r}2^{n-8-2i}+(n+2+f-2\lceil\frac{n}{2}\rceil) 2^{2r-1-f}} \\
&=\textstyle{(5-i) \sum\nolimits_{i=0}^{2} 2^{n-4-i}+3 \sum\nolimits_{i=0}^{\lceil \frac{n}{2}\rceil-4-r} 2^{n-8-2i}+2^{2r-f}}\\
&=5 \cdot 2^{n-4}+4 \cdot 2^{n-5}+4 \cdot2^{n-6}-2^{2 r-f}+2^{2 r-f} =3 \cdot 2^{n-3}+2^{n-3} =2^{n-1}.
\end{align*}

\noindent
\textbf{Case 2}. For $ r=\lceil\frac{n}{2}\rceil-3, m_{n,r}=\sum\nolimits_{i=0}^{3} 2^{n-4-i}$, by Lemma~\ref{s3} and the formula~\eqref{formula5},
\begin{align*}
\xi_{m_{n, r}}(Q_{n,2})&=(n+1) m_{n, r}-e x_{m_{n, r}}(Q_{n,2})\\
&=\textstyle{(n+1)\sum_{i=0}^{3}(n-4-i)2^{n-4-i}-\sum\nolimits_{i=0}^{3}(n-4-i) 2^{n-4-i}-\sum\nolimits_{i=0}^{3}
2i2^{n-4-i}}\\
&=\textstyle{(5-i) \sum\nolimits_{i=0}^{3} 2^{n-4-i}=5\times2^{n-4}+4\times2^{n-5}+3\times2^{n-6}+2\times2^{n-7}=2^{n-1}}.
\end{align*}

\noindent
\textbf{Case 3}. For $ r=\lceil\frac{n}{2}\rceil-2, m_{n,r}=2^{n-3},$ by Lemma~\ref{s3} and the formula~\eqref{formula5}, it is not difficult to see
that $$\xi_{ 2^{n-3}}(Q_{n,2})=(n+1)\times2^{n-3}-(n-3)\times 2^{n-3}=2^{n-1}.$$

\noindent
\textbf{Case 4}. For $ r=\lceil\frac{n}{2}\rceil-1, m_{n,r}=2^{n-1},$ by the formula~\eqref{formula5} and  Lemma~\ref{s3}, then $$\xi_{2^{n-1}}(Q_{n,2})=( n+1 )\times 2^{n-1}-[(n-1)\times2^{n-1}+2\times2^{n-1}-2^{n-1}]=2^{n-1}.$$
From the above four cases, it can conclude that $\xi_{m_{n,r}}(Q_{n,2})=2^{n-1}$ for $r =1, 2, \textcolor{purple}{\cdots}, \lceil\frac{n}{2}\rceil-1$. The proof is completed.
\end{proof}
\begin{lemma}\label{s10}
Given two integers $n\geq9, \left\lceil\frac{11\times2^{n-1}}{48}\right\rceil\le m\leq2^{n-1},$ there exists a positive integer $r,$ satisfying $m_{n,r}<m< m_{n,r+1}$.
\begin{align*}
\xi_{m}\left(Q_{n,2}\right)>\xi_{m_{n,r}}\left(Q_{n,2}\right) &=\xi_{m_{n,r+1}}\left(Q_{n,2}\right)=\cdots=\xi_{m_{n,\lceil\frac{n}{2}\rceil-1}}\left(Q_{n,2}\right)\\
&=\xi_{2^{n-1}}(Q_{n,2})=\xi_{\left\lceil\frac{11\times2^{n-1}}{48}\right\rceil}(Q_{n,2})=2^{n-1}.
\end{align*}
\end{lemma}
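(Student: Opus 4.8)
The plan is to show that $\xi_m(Q_{n,2})>2^{n-1}$ for every $m$ with $\lceil\frac{11\times2^{n-1}}{48}\rceil< m\le 2^{n-1}$ that is \emph{not} one of the breakpoints $m_{n,r}$, and then invoke Lemma~\ref{s9} (which already gives $\xi_{m_{n,r}}(Q_{n,2})=2^{n-1}$ for all $r$) to conclude the chain of equalities in the statement. The backbone of the argument is the partition of the interval into the $\lceil\frac n2\rceil-1$ consecutive subintervals $m_{n,r}<m\le m_{n,r+1}$, combined with the exact formula $\xi_m(Q_{n,2})=(n+1)m-ex_m(Q_{n,2})$ from Lemma~\ref{s3} and the superadditivity-type estimates on $ex_m$ coming from Lemmas~\ref{s1}, \ref{s20} and the iterative decomposition Lemma~\ref{s8}.

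First I would fix $r$ with $m_{n,r}<m\le m_{n,r+1}$ and write $m=m_{n,r}+t$ with $1\le t\le m_{n,r+1}-m_{n,r}$. Using Lemma~\ref{s8} to split off the leading block of $m_{n,r}$, one gets $ex_m(Q_{n,2})\le ex_{m_{n,r}}(Q_{n,2})+ex_{t}(Q_{n,2})+2(a+1)t$ (or the $+2m_1+\cdots$ variant when $m>2^{n-2}$), where $a+1$ is the number of binary digits of $m_{n,r}$ lying above the top digit of $t$; and by Lemma~\ref{s20}, $ex_t(Q_{n,2})\le (t_0(t)+2)t$ where $t_0(t)=\lfloor\log_2 t\rfloor$. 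Since $\xi_{m_{n,r}}(Q_{n,2})=2^{n-1}$ by Lemma~\ref{s9}, this yields
\begin{equation}
\xi_m(Q_{n,2})=\xi_{m_{n,r}}(Q_{n,2})+(n+1)t-\bigl(ex_m(Q_{n,2})-ex_{m_{n,r}}(Q_{n,2})\bigr)\ge 2^{n-1}+\bigl(n-1-t_0(t)-2(a+1)\bigr)t.
\end{equation}
So it suffices to check, subinterval by subinterval, that $t_0(t)+2(a+1)\le n-2$, i.e. that the added block $t$ is ``small'' relative to the gap above it in $m_{n,r}$ — this is exactly where the specific form of $m_{n,r}$ (three high digits $2^{n-4}+2^{n-5}+2^{n-6}$ followed by a sparse geometric tail $2^{n-8-2i}$) is used: in each of the four cases (e)--(h) the number of digits of $m_{n,r}$ above the possible range of $t$ is bounded, and $t$ itself is bounded above by $m_{n,r+1}-m_{n,r}$, which is small. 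When the bound $t_0(t)+2(a+1)\le n-2$ is not strict one has to track the inequality in Lemma~\ref{s1}(2)/Lemma~\ref{s8} more carefully (the $+2m_0$ / $+2(a+1)m_2$ terms are equalities for the standard initial segments, and the slack comes from $t$ not being a power of two or from $G[L^n_m]$ picking up fewer than the maximal number of edges), which forces strict inequality $\xi_m>2^{n-1}$ exactly when $m\ne m_{n,r},m_{n,r+1}$, giving part (b)'s ``only if'' direction as a byproduct.

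The remaining equalities $\xi_{m_{n,r}}(Q_{n,2})=\xi_{m_{n,r+1}}(Q_{n,2})=\cdots=\xi_{2^{n-1}}(Q_{n,2})=\xi_{\lceil\frac{11\times2^{n-1}}{48}\rceil}(Q_{n,2})=2^{n-1}$ are immediate from Lemma~\ref{s9} once we recall $\lceil\frac{11\times2^{n-1}}{48}\rceil=m_{n,1}$ and $2^{n-1}=m_{n,\lceil n/2\rceil-1}$, both of which are verified in the text preceding the lemma. I expect the main obstacle to be the bookkeeping in Case~1 ($1\le r\le\lceil\frac n2\rceil-4$): there the gap $m_{n,r+1}-m_{n,r}=2^{2r-1-f}-2^{n-8-2(\lceil n/2\rceil-4-r)}+\cdots$ has a genuinely $r$-dependent shape, the top digit of $t$ can reach roughly $2^{2r-f}$, and one must verify that even in the worst case the coefficient $n-1-t_0(t)-2(a+1)$ stays nonnegative (and is positive unless $m$ hits the next breakpoint). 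This amounts to a careful case analysis on where the leading bit of $t$ sits relative to the sparse tail of $m_{n,r}$, using $r\le\lceil\frac n2\rceil-4$ and $n\ge9$ to close the numerics; the other three cases (f),(g),(h) involve only $O(1)$ high digits and are routine by comparison, and can largely be reduced to Lemma~\ref{s15}.
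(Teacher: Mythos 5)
You follow the same skeleton as the paper (write $m=m_{n,r}+t$, split off $m_{n,r}$ via Lemma~\ref{s8}, bound the increment of $ex$, and invoke Lemma~\ref{s9} for the chain of equalities), but the quantitative criterion you reduce everything to is false, and this is a genuine gap. In case (e), since $t<m_{n,r+1}-m_{n,r}=2^{2r-1-f}$ and the lowest binary digit of $m_{n,r}$ is exactly $2^{2r-1-f}$, \emph{all} digits of $m_{n,r}$ lie above the top digit of $t$; hence $a+1$ is the total number of digits of $m_{n,r}$, namely $\lceil\frac{n}{2}\rceil+1-r$, so $2(a+1)=n+f+2-2r$, while $t_0(t)$ can be as large as $2r-2-f$. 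Thus $t_0(t)+2(a+1)$ equals $n$ in the worst case, never $\le n-2$, and your displayed bound collapses to $\xi_m(Q_{n,2})\ge 2^{n-1}-t$; even the sharper estimate $ex_t(Q_{n,2})=ex_t(Q_n)\le (t_0(t)+1)t$ only yields $\xi_m(Q_{n,2})\ge 2^{n-1}$, not the strict inequality the lemma needs. So the coefficient $n-1-t_0(t)-2(a+1)$ is \emph{not} nonnegative in the worst case, and no bookkeeping of the type you outline can make it positive; moreover the sources of slack you name are not the real ones (strictness must hold even when $t$ is an exact power of two, and $ex_t$ is by definition attained by $L^n_t$). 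As a minor point, the gap is exactly $m_{n,r+1}-m_{n,r}=2^{2r-1-f}$, so the top bit of $t$ is $2^{2r-2-f}$, not roughly $2^{2r-f}$.

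The missing idea is to use Lemma~\ref{s8} as an exact identity and then shrink the ambient cube. Because every digit of $t$ lies strictly below $2^{2r-1-f}$, one gets $\xi_m(Q_{n,2})-\xi_{m_{n,r}}(Q_{n,2})=(n+1)t-2(a+1)t-ex_t(Q_n)=(2r-1-f)\,t-ex_t(Q_{2r-1-f})$, and this last quantity equals $\bigl|[L_t^{2r-1-f},\overline{L_t^{2r-1-f}}]_{Q_{2r-1-f}}\bigr|\ge 1$, strictly positive simply because $1\le t<2^{2r-1-f}$ makes this a nonempty edge cut of the connected cube $Q_{2r-1-f}$. In other words, the positivity lives in the free dimensions below the lowest digit of $m_{n,r}$, not in an $n$-dependent coefficient; this reduction of the difference to a positive edge-cut size in a much smaller hypercube is in effect what the paper's Case 1 does (there phrased as a quantity of the form $(n-1)p-ex_p(Q_{n-1})>0$). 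Two further corrections: Lemma~\ref{s15} gives only non-strict inequalities, so it cannot supply the required strict $\xi_m>2^{n-1}$ in cases (f)--(h); and on $2^{n-2}<m\le 2^{n-1}$ you must handle the switch of formula to $ex_m(Q_{n,2})=ex_m(Q_n)+2m-2^{n-1}$, which the paper treats separately (its Case 2) by comparing with $\xi_{2^{n-2}}$ and absorbing a surplus of $2^{n-2}$.
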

\begin{proof}
According to different expressions of $ex_m(Q_{n,2})$, the proof will be divided into two cases.

{\noindent\bf Case 1.} $\left\lceil\frac{11\times2^{n-1}}{48}\right\rceil< m\leq2^{n-2}$.

One can check that $m_{n, r+1}-m_{n, r}= 2^{2r-1-f}$ for $1\leq r\leq \lceil\frac{n}{2}\rceil-3$. By Lemma~\ref{s9},
$$\xi_{m_{n,r}}(Q_{n,2})=\xi_{2^{n-1}}(Q_{n,2})=2^{n-1}$$
for $1 \leq r \leq \lceil\frac{n}{2}\rceil-1$. Let $m=m_{n, r}+p$, where
\begin{align*}
m_{n, r}&=\textstyle{\sum\nolimits_{i=0}^{2} 2^{n-4-i}+\sum\nolimits_{i=0}^{\lceil\frac{n}{2}\rceil-4-r} 2^{n-8-2i}+2^{2r-1-f}}&\textstyle{\text{for $1\leq r<\lceil\frac {n}{2}\rceil-4$}}, \\
m_{n,r}&=\textstyle{\sum\nolimits_{i=0}^{3} 2^{n-4-i}}&\textstyle{\text{for $r=\lceil\frac {n}{2}\rceil-3$}},
\end{align*}
$0 \leq p<2^{2r-1-f}, p=\sum_{i=0}^s2^{t'_{i}}<m_{n, r+1}-m_{n, r}, 2r-1-f>t'_{0}>t'_{1}>\cdot\cdot\cdot>t'_{s}$. By the equation~\eqref{formula5} and Lemma~\ref{s8}, one can deduce that
\begin{align*}
&\xi_{m}(Q_{n,2})-\xi_{m_{n,r}}(Q_{n,2}) \\
&\quad=(n+1)(m_{n,r}+p)-e x_{m_{n,r}+p}(Q_{n,2})-(n+1)m_{n,r}+e x_{m_{n,r}}(Q_{n,2})\\
&\quad=(n+1)p-e x_{m_{n, r}+p}(Q_{n,2})+e x_{m_{n, r}}(Q_{n,2})\quad\textrm{(Lemma~\ref{s3})}\\
&\quad=\textstyle{(n+1)p-e x_{m_{n, r}}(Q_{n,2})-ex_{p}(Q_{n,2})-2(\lceil \frac{n}{2}\rceil-r+1)p+e x_{m_{n, r}}(Q_{n,2})}\quad\textrm{(Lemma~\ref{s1}(2))}\\
&\quad=(2r-1-f)p-ex_p(Q_{n,2}) =(2r-1-f)p-ex_p(Q_{2r-1-f}) =\xi_p(Q_{2r-1-f}).
\end{align*}

For $p<2^{2r-1-f}<2^{n-1},$ the
value of $ex_{p}(Q_{n})$ is uniquely determined by the binary representation of $p$. Therefore, $ex_{p}(Q_{n})=ex_{p}(Q_{2r-1-f})$. By Lemma~\ref{s1},
$$ex_{p}(Q_{2r-1-f})=2\lvert E(Q_{2r-1-f}[L_{p}^{2r-1-f}])\rvert.$$
$[L_{p}^{2r-1-f},\overline{L_{p}^{2r-1-f}}]_{Q_{2r-1-f}}$ be an
edge cut of $Q_{2r-1-f}$. Since $Q_{2r-1-f}$ is connected graph, and if
one deletes the edge cut $[L_{p}^{2r-1-f}, \overline{L_{p}^{2r-1-f}}]_{Q_{2r-1-f}},$ two induced subgraphs $Q_{2r-1-f}[L_{p}^{2r-1-f}]$ and $Q_{2r-1-f}[\overline{L_{p}^{2r-1-f}}]$
are connected. So the edge cut $[L_{p}^{2r-1-f}, \overline{L_{p}^{2r-1-f}}]_{Q_{2r-1-f}}$ of $Q_{2r-1-f}$ does exist. By Lemma~\ref{s20}, it is sufficient to show that $ex_{p}(Q_{2r-1-f})\leq (2r-1-f)p,$ and $\xi_{m}(Q_{n,2})-\xi_{m_{n,r}}(Q_{n,2})=(2r-1-f)p-e x_{p}\left(Q_{2r-1-f}\right)>0$.

If $r=\lceil\frac{n}{2}\rceil-2$, then $m_{n,r}=2^{n-3}$. There exists a
positive integer $p'=\sum_{i=0}^{s}2^{t'_{i}},$ satisfying $0 \leq p'< 2^{n-3}$, $m=2^{n-3}+p'$ and $n-3>t'_{0}>t'_{1}>\cdot\cdot\cdot>t'_{s}$. The proof of $\xi_{m}(Q_{n,2})>\xi_{m_{n,r}}(Q_{n,2})$ is the same as the above proof of $\xi_{m}(Q_{n,2})>\xi_{m_{n,r}}(Q_{n,2}).$

\noindent
\textbf{Case 2}. $2^{n-2}< m\leq 2^{n-1}$.

If $r=\lceil\frac{n}{2}\rceil-2$, then $m_{n,r}=2^{n-3}$. There exists a
positive integer $m''=\sum_{i=0}^{s}2^{t'_{i}},$ satisfying $0 \leq m''< 2^{n-2}$, $m=m_{n,r}+2^{n-3}+m''=2^{n-2}+m''$ and $ n-2>t'_{0}>t'_{1}>\cdot\cdot\cdot>t'_{s}$. By  the equation~\eqref{formula5} and Lemma~\ref{s8},
\begin{align*}
\xi_{m}(Q_{n,2})&-\xi_{2^{n-3}}(Q_{n,2}) \\
&=\xi_{m}(Q_{n,2})-\xi_{2^{n-2}}(Q_{n,2})+\xi_{2^{n-2}}(Q_{n,2})-\xi_{2^{n-3}}(Q_{n,2})\\
&=(n+1)(2^{n-2}+m'')-(n+1)2^{n-2}-(e x_{m}(Q_{n,2})-e x_{2^{n-2}}(Q_{n,2}))+2^{n-2}\\
&=(n+1)m''-(ex_{2^{n-2}+m''}(Q_{n})+2m'')+ex_{2^{n-2}}(Q_{n})+2^{n-2}~~(\textrm{Lemma~\ref{s3}})\\
&=(n+1)m''-ex_{2^{n-2}}(Q_{n})-ex_{m''}(Q_{n})-4m''+ex_{2^{n-2}}(Q_{n})+2^{n-2}\\
&=(n+1)m''-ex_{m''}\left(Q_{n}\right)-4m''+2^{n-2}=(n-3)m''-ex_{m''}\left(Q_{n}\right)+2^{n-2}\\
&=(n-3)m''-ex_{m''}\left(Q_{n-3}\right)+2^{n-2}
=\xi_{m''}(Q_{n-3})+2^{n-2}
>0.
\end{align*}

For $0< m'' \leq 2^{n-2},$ the
value of $ex_{m''}(Q_{n})$ is uniquely determined by the binary representation of $m''$. Thus, $ex_{m''}(Q_{n})=ex_{m''}(Q_{n-3})$. By Lemma~\ref{s20}, $(n-3)m''-ex_m''\left(Q_{n-3}\right)>0$ for $0< m'' \leq 2^{n-2}.$ Thus, $\xi_{m}(Q_{n,2})>\xi_{m_{n,r}}(Q_{n,2})$.

Combining the above two cases, $\xi_{m}(Q_{n,2})> \xi_{m_{n,r}}(Q_{n,2})=\xi_{m_{n,r+1}}(Q_{n,2})=\xi_{\left\lceil\frac{11\times2^{n-1}}{48}\right\rceil}(Q_{n,2})=2^{n-1}$ for $1\leq r\leq \lceil\frac{n}{2}\rceil-2$. So the proof is completed.
\end{proof}
\section{The $h$-extra edge-connectivity of $Q_{n,2}$ concentrates on $2^{n-1}$ \\for $\left\lceil\frac{11\times2^{n-1}}{48}\right\rceil\leq h \leq 2^{n-1}$}\label{sec4}

\text{\textbf{The proof of Theorem }} \ref{c1} \text {(a):} \\
Given each integer $h$, for $\left\lceil\frac{11\times2^{n-1}}{48}\right\rceil\leq h \leq m_{n,\lceil\frac{n}{2}\rceil-1}=2^{n-1}$, there exists an integer $r, 1\leq r\leq \lceil\frac{n}{2}\rceil-1,$ satisfying $m_{n,r}\leq h\leq m_{n,r+1}$.
By Lemma~\ref{s5} and Lemma~\ref{s10},
$$\lambda_h(Q_{n,2})=\min\{\xi_m(Q_{n,2}): m_{n,r} \leq h \leq m < m_{n,r+1}\}=\xi_{m_{n,r}}(Q_{n,2})$$
for $r=1,2, \cdots, \lceil\frac{n}{2}\rceil-1$. So
for any $\left\lceil\frac{11\times2^{n-1}}{48}\right\rceil\leq h \leq 2^{n-1}$,
\begin{align*}
\lambda_{h}\left(Q_{n,2}\right)&=\min\{\xi_{m}(Q_{n,2}): h\leq m\leq 2^{n-1}\}  \qquad\textrm{(Lemma~\ref{s5})}\\
&=\min\{\xi_{m}(Q_{n,2}): h\leq m\leq m_{n,\lceil\frac{n}{2}\rceil-1}\}    \qquad\textrm{(Lemmas~\ref{s10} and~\ref{s15})}\\
&=\xi_{2^{n-1}}(Q_{n,2})  \qquad\textrm{(Lemmas~\ref{s9} and~\ref{s10})}\\
&=2^{n-1}.
\end{align*}

\noindent
\text {\textbf{The proof of Theorem }} \ref{c1}\text{(b):}\\
If $h = m_{n,r}$ or $h = m_{n,r+1},$ by Lemma~\ref{s5} and Lemma~\ref{s9}} $\lambda_{h}(Q_{n,2})=\xi_{h}(Q_{n,2})=2^{n-1}$. If $m_{n,r} < h < m_{n,r+1}$, by Lemma~\ref{s10}, $\xi_{h}(Q_{n,2})>\xi_{m_{n,r+1}}(Q_{n,2})$, by Lemma~\ref{s5} and Lemma~\ref{s9},
\begin{align*}
\lambda_{h}(Q_{n,2})&=\min\{\xi_m(Q_{n,2}): h \leq m \leq m_{n,r+1}\}\\
&=\xi_{m_{n, r+1}}(Q_{n,2})=\xi_{m_{n, r+2}}(Q_{n,2})=\cdot\cdot\cdot=\xi_{m_{n, \lceil\frac{n}{2}\rceil-1}}(Q_{n,2})=2^{n-1}.
\end{align*}
So, one can get $\lambda_h(Q_{n, 2})=\xi_h(Q_{n, 2})=2^{n-1}$ for $h=m_{n, r}$ or $h=m_{n, r+1}, 1 \leq r < \lceil\frac{n}{2}\rceil-1$.

 The proof is completed.

\medskip

\begin{remark} For $\left\lceil\frac{11\times2^{n-1}}{48}\right\rceil\leq h \leq 2^{n-1}$ and $n\geq9$, the lower and upper bounds of $h$ of $\lambda_h(Q_{n,2})$ in the above Theorem~\ref{c1} are both tight.

(1) In fact, if $n$ is even, then
\begin{align*}
m_{n,1}&=\textstyle{\sum\nolimits_{i=0}^{2} 2^{n-4-i}+\sum\nolimits_{i=0}^{\lceil\frac{n}{2}\rceil-5} 2^{n-8-2i}+2},\\
m_{n,1}-1&=\textstyle{\sum\nolimits_{i=0}^{2} 2^{n-4-i}+\sum\nolimits_{i=0}^{\lceil\frac{n}{2}\rceil-5} 2^{n-8-2i}+1}.
\end{align*}
By Lemma~\ref{s5}, $ex_{m_{n,1}}(Q_{n,2})=ex_{m_{n,1}-1}(Q_{n,2})+n$. So,
$$\xi_{m_{n,1}}(Q_{n,2})-\xi_{m_{n,1}-1}(Q_{n,2})=(n+1)m_{n,1}-ex_{m_{n,1}}(Q_{n,2})-(n+1)(m_{n,1}-1)+ex_{m_{n,1}-1}(Q_{n,2})=1.$$
Note that
\begin{align*}
\lambda_{m_{n,1}-1}(Q_{n,2})&=\min\{\xi_{h}(Q_{n,2}) : m_{n,1}-1\leq h \leq m_{n,1}\}=\xi_{m_{n,1}-1}(Q_{n,2})\\
&=2^{n-1}-1<2^{n-1}=\lambda_{m_{n,1}}(Q_{n,2})=\xi_{m_{n,1}}(Q_{n,2}).
\end{align*}
If $n$ is odd, then
\begin{align*}
m_{n,1}&=\textstyle{\sum\nolimits_{i=0}^{2} 2^{n-4-i}+\sum\nolimits_{i=0}^{\lceil\frac{n}{2}\rceil-5} 2^{n-8-2i}+1},\\
 m_{n,1}-1&=\textstyle{\sum\nolimits_{i=0}^{2} 2^{n-4-i}+\sum\nolimits_{i=0}^{\lceil\frac{n}{2}\rceil-5} 2^{n-8-2i}}.
 \end{align*}
 By Lemma~\ref{s5}, $ex_{m_{n,1}}(Q_{n,2})=ex_{m_{n,1}-1}(Q_{n,2})+n-1$. So,
 $$\xi_{m_{n,1}}(Q_{n,2})-\xi_{m_{n,1}-1}(Q_{n,2})=(n+1)m_{n,1}-ex_{m_{n,1}}(Q_{n,2})-(n+1)(m_{n,1}-1)+ex_{m_{n,1}-1}(Q_{n,2})=2.$$
Similarly, it can be seen that
\begin{align*}
\lambda_{m_{n,1}-1}(Q_{n,2})&=\min\{\xi_{m}(Q_{n,2}) : m_{n,1}-1\leq m \leq m_{n,1}\}=\xi_{m_{n,1}-1}(Q_{n,2})\\
&=2^{n-1}-2<2^{n-1}=\lambda_{m_{n,1}}(Q_{n,2})=\xi_{m_{n,1}}(Q_{n,2}).
\end{align*}
Therefore, the lower bound is sharp.

(2) As $|V(Q_{n,2})|=2^{n},$  by the definition of $h$-extra edge-connectivity, there are at least two components with at least $h$ vertices. So, the upper bound of the above interval is $2^{n-1}$. Therefore, the upper bound is sharp.
\end{remark}

In cases where $4\leq n\leq 9$ and $h\leq2^{n-1}$, the values of $\lambda_{h}(Q_{n,2})$ and $\xi_{h}(Q_{n,2})$ are listed in Table~\ref{tab4}. And the values of $\lambda_{h}(Q_{n,2})$ do not satisfy the equality $\lambda_{h}(Q_{n,2})=\xi_{h}(Q_{n,2})$ are marked in red, otherwise are marked in black. Based on these data, the scatter plots of $\xi_h(Q_{n,2})$ and $\lambda_h(Q_{n,2})$ are plotted. We plot the $\xi_h(Q_{n,2})$ marked in
 `` $\Delta$ " scatters and the $\lambda_h(Q_{n,2})$  marked in `` $*$ " scatters for $4 \leq n \leq 12 $ in Fig~\ref{fig5}.  On the $X$-axis in Fig.~\ref{fig5}, the results of this article are represented by the green lines.

 \begin{table}[h]
\centering
\caption{Examples of $\xi_h(Q_{n,2})$ and $\lambda_h(Q_{n,2})$ for $4\leq n\leq9$.}\label{tab4}
\resizebox{\textwidth}{!}
{
\begin{tabular}{@{}llllllllllllllllllllllllllllllllllllllllllllllllllllllllllllllllllllllllllll@{}}
\hline
\hline
 $h$  & 1  & 2 & 3 & 4 & 5 & 6 & 7 & 8 & 9 & 10 & 11  & 12  & 13  & 14  & 15  & 16 & 17 & 18 & 19  & 20 & 21  & 22  & 23 & 24 & 25 & 26 & 27 & 28 & 29 & 30 & 31 & 32 \\ \hline
$\xi_h(Q_{4,2})$  & 5   & 8   & 11  & 12  & 13  & 12  & 11  & 8   &     &     &     &     &     &     &     &  & & & & & & & & & & & & & & & &  \\
$\lambda_h(Q_{4,2})$  & 5   & 8   &{\color{red}8}   &{\color{red}8}   &{\color{red}8}   &{\color{red}8}   &{\color{red}8}   & 8   &     &     &     &     &     &     &     &   &&&&&&&&&&&&&&&& \\
$\xi_h(Q_{5,2})$  & 6   & 10  & 14  & 16  & 20  & 22  & 24  & 24  & 26  & 26  & 26  & 24  & 24  & 22  & 20  & 16 &&&&&&&&&&&&&&&&\\
$\lambda_h(Q_{5,2})$  & 6   & 10  & 14  & 16  &{\color{red}16}  &{\color{red}16}  &{\color{red}16}  &{\color{red}16}  &{\color{red}16}  & {\color{red}16}  &{\color{red}16}  &{\color{red}16}  &{\color{red}16}  &{\color{red}16}  &{\color{red}16}  & 16 &&&&&&&&&&&&&&&&\\
$\xi_h(Q_{6,2})$  & 7   & 12  & 17  & 20  & 25  & 28  & 31  & 32  & 37  & 40  & 43  & 44  & 47  & 48  & 49  & 48 & 51  & 52  & 53  & 52  & 53  & 52  & 51  & 48  & 49  & 48  & 47  & 44  & 43  & 40  & 37  & 32 \\
$\lambda_h(Q_{6,2})$   & 7   & 12  & 17  & 20  & 25  & 28  & 31  & 32  &{\color{red}32}  &{\color{red}32}  &{\color{red}32}  &{\color{red} 32}  &{\color{red}32}  &{\color{red}32}  &{\color{red}32}  &{\color{red}32} &{\color{red}32}  &{\color{red}32}  &{\color{red}32}  &{\color{red}32}  &{\color{red}32}  &{\color{red}32}  &{\color{red}32}  &{\color{red}32}  &{\color{red}32}  &{\color{red}32}  &{\color{red}32}  &{\color{red}32}  &{\color{red}32}  &{\color{red} 32}  &{\color{red}32}  & 32\\
$\xi_h(Q_{7,2})$  & 8   & 14  & 20  & 24  & 30  & 34  & 38  & 40  & 46  & 50  & 54  & 56  & 60  & 62  & 64  & 64 & 70  & 74  & 78  & 80  & 84  & 86  & 88  & 88  & 92  & 94  & 96  & 96  & 98  & 98  & 98  & 96 \\
$\lambda_h(Q_{7,2})$   & 8   & 14  & 20  & 24  & 30  & 34  & 38  & 40  & 46  & 50  & 54  & 56  & 60  & 62  & 64  & 64 &{\color{red}64}  &{\color{red}64}  &{\color{red}64}  &{\color{red}64}  &{\color{red}64}  &{\color{red}64}  &{\color{red}64}  &{\color{red}64}  &{\color{red}64}  &{\color{red}64}  &{\color{red}64}  &{\color{red} 64}  &{\color{red}64}  &{\color{red} 64}  &{\color{red} 64}  &{\color{red} 64}\\
$\xi_h(Q_{8,2})$ &9 & 16 & 23   & 28  & 35   & 40  & 45  & 48  &  55  & 60   &  65  & 68  & 73   & 76   & 79   & 80&87   &92   &97   &100   &105   &108    & 111   & 112   & 117  & 120   & 123  & 124  &  127  & 128  &129   &128\\
$\lambda_h(Q_{8,2})$ &9 & 16 & 23   & 28  & 35   & 40  & 45  & 48  &  55  & 60   &  65  & 68  & 73   & 76   & 79   & 80&87   &92   &97   &100   &105   &108 & 111   & 112   & 117  & 120   & 123  & 124  &  127  &{\color{red}128}&{\color{red}128}&{\color{red}128}\\
$\xi_h(Q_{9,2})$ & 10  &  18    &  26     &  32    &  40     &  46  & 52  & 56  & 64  & 70   &  76  & 80   &  86 &  90  & 94   &96&104  & 110   & 116  &120   & 126  & 130  & 134  & 136  &142  &146   & 150  &152   &156   & 158  &160   &160\\
$\lambda_h(Q_{9,2})$ & 10  &  18    &  26     &  32 &  40  &  46  & 52   & 56  & 64  & 70   &  76  & 80   &  86   &  90  & 94   & 96 & 104  & 110   & 116  &120   & 126  & 130  & 134  & 136  & 142  & 146   & 150  & 152   & 156   & 158  & 160   & 160\\
\hline
$h$               & 33  & 34  & 35  & 36  & 37  & 38  & 39  & 40  & 41  & 42  & 43  & 44  & 45  & 46  & 47  & 48&49&50&51&52&53&54&55&56&57&58&59&60&61&62&63&64\\
\hline
$\xi_h(Q_{7,2})$  & 100 & 102 & 104 & 104 & 106 & 106 & 106 & 104 & 106 & 106 & 106 & 104 & 104 & 102 & 100 & 96& 98  & 98  & 98  & 96  & 96  & 94  & 92  & 88  & 88  & 86  & 84  & 80  & 78  & 74  & 70  & 64  \\
$\lambda_h(Q_{7,2})$ &{\color{red} 64}  &{\color{red}64}  &{\color{red}64}  &{\color{red}64}  &{\color{red}64}  &{\color{red}64}  &{\color{red} 64}  &{\color{red}64}  &{\color{red}64}  &{\color{red}64}  &{\color{red}64}  &{\color{red}64}  &{\color{red}64 } &{\color{red} 64}  &{\color{red} 64}  &{\color{red} 64} &{\color{red}64}  &{\color{red}64}  &{\color{red}64}  &{\color{red}64}  &{\color{red}64}  &{\color{red}64}  &{\color{red} 64}  &{\color{red}64}  &{\color{red} 64}  &{\color{red}64 } &{\color{red} 64}  &{\color{red}64}  &{\color{red}64}  &{\color{red} 64}  &{\color{red} 64}  & 64 \\
$\xi_h(Q_{8,2})$ & 135  & 140  & 145  &148   & 153  & 156   & 159  & 160  &165   &  168  & 171  & 172  & 175  & 176  &  177  & 176&181  &184   &187  &188  & 191  & 192  & 193  & 192  &195   & 196 &197  & 196 & 197 & 196  &195   &192\\
$\lambda_h(Q_{8,2})$ &{\color{red}128}&{\color{red}128}&{\color{red}128}&{\color{red}128}&{\color{red}128}&{\color{red}128}&{\color{red}128}&{\color{red}128}
&{\color{red}128}&{\color{red}128}&{\color{red}128}&{\color{red}128}&{\color{red}128}&{\color{red}128}&{\color{red}128}&{\color{red}128}
&{\color{red}128}&{\color{red}128}&{\color{red}128}&{\color{red}128}&{\color{red}128}&{\color{red}128}&{\color{red}128}&{\color{red}128}
&{\color{red}128}&{\color{red}128}&{\color{red}128}&{\color{red}128}&{\color{red}128}&{\color{red}128}&{\color{red}128}&{\color{red}128}\\
$\xi_h(Q_{9,2})$ &168  &174  &180  & 184  & 190  & 194  & 198  & 200  & 206  & 210  &214  &216   &220   & 222  & 224  &224&230 &234 &238 &240 &244 &246 &248 &248 &252 &254 &256 &256 & 258&258 &258 &256\\
$\lambda_h(Q_{9,2})$ &168  &174  &180  & 184  & 190  & 194  & 198  & 200  & 206  & 210  &214  &216   &220   & 222  & 224  &224 &230 &234 &238 &240 &244 &246 &248 &248 &252 &254 &{\color{red}256} &{\color{red}256}&{\color{red}256}&{\color{red}256}&{\color{red}256}&{\color{red}256}\\
\hline
$h$ &65&66&67& 68& 69& 70& 71& 72& 73& 74& 75 &76& 77& 78& 79& 80& 81& 82& 83& 84 &85& 86& 87& 88 &89 &90& 91 &92& 93 &94& 95& 96& \\
\hline
$\xi_h(Q_{8,2})$ &197	&200	&203&	204	&207&	208&	209&	208	&211&	212&	213	&212&	213	&212&	211&	208&	211&	212&	213&	212&	213&	212&	211&	208&	209&	208&	207&	204&	203&	200&	197&	192	\\
$\lambda_h(Q_{8,2})$&{\color{red}128}&{\color{red}128}&{\color{red}128}&{\color{red}128}&{\color{red}128}&{\color{red}128}&{\color{red}128}&{\color{red}128}
&{\color{red}128}&{\color{red}128}&{\color{red}128}&{\color{red}128}&{\color{red}128}&{\color{red}128}&{\color{red}128}&{\color{red}128}
&{\color{red}128}&{\color{red}128}&{\color{red}128}&{\color{red}128}&{\color{red}128}&{\color{red}128}&{\color{red}128}&{\color{red}128}
&{\color{red}128}&{\color{red}128}&{\color{red}128}&{\color{red}128}&{\color{red}128}&{\color{red}128}&{\color{red}128}&{\color{red}128}\\
$\xi_h(Q_{9,2})$ &264&	270&	276&	280&	286&	290&	294&	296&	302&	306&	310	&312&	316&	318&	320	&320	&326	&330&	334&	336	&340&	342&	344&	344&	348&	350&	352	&352&	354&	354&	354&	352&\\
$\lambda_h(Q_{9,2})$ &{\color{red}256}&{\color{red}256}&{\color{red}256}&{\color{red}256}&{\color{red}256}&{\color{red}256}&{\color{red}256}&{\color{red}256}
&{\color{red}256}&{\color{red}256}&{\color{red}256}&{\color{red}256}&{\color{red}256}&{\color{red}256}&{\color{red}256}&{\color{red}256}
&{\color{red}256}&{\color{red}256}&{\color{red}256}&{\color{red}256}&{\color{red}256}&{\color{red}256}&{\color{red}256}&{\color{red}256}
&{\color{red}256}&{\color{red}256}&{\color{red}256}&{\color{red}256}&{\color{red}256}&{\color{red}256}&{\color{red}256}&{\color{red}256}\\
\hline
$h$ &97&98& 99 &100& 101& 102& 103& 104& 105& 106& 107& 108& 109& 110& 111& 112& 113& 114& 115 &116& 117& 118& 119& 120& 121 &122& 123& 124& 125& 126& 127& 128\\
\hline
$\xi_h(Q_{8,2})$ &195&	196	&197&	196	&197&	196	&195&	192&	193	&192&	191&	188&	187&	184	&181&	176&	177&	176&	175	&172&	171&	168	&165&	160&	159&	156&	153&	148&	145&	140&	135	&128\\
$\lambda_h(Q_{8,2})$&{\color{red}128}&{\color{red}128}&{\color{red}128}&{\color{red}128}&{\color{red}128}&{\color{red}128}&{\color{red}128}&{\color{red}128}
&{\color{red}128}&{\color{red}128}&{\color{red}128}&{\color{red}128}&{\color{red}128}&{\color{red}128}&{\color{red}128}&{\color{red}128}
&{\color{red}128}&{\color{red}128}&{\color{red}128}&{\color{red}128}&{\color{red}128}&{\color{red}128}&{\color{red}128}&{\color{red}128}
&{\color{red}128}&{\color{red}128}&{\color{red}128}&{\color{red}128}&{\color{red}128}&{\color{red}128}&{\color{red}128}&{\color{red}128}\\
$\xi_h(Q_{9,2})$&358&	362	&366&	368&	372&	374&	376&	376&	380	&382&	384&	384	&386&	386&	386&	384&	388	&390&	392&	392&	394&	394&	394&	392&	394&	394&	394&392	&392&	390&	388&	384\\
$\lambda_h(Q_{9,2})$&{\color{red}256}&{\color{red}256}&{\color{red}256}&{\color{red}256}&{\color{red}256}&{\color{red}256}&{\color{red}256}&{\color{red}256}
&{\color{red}256}&{\color{red}256}&{\color{red}256}&{\color{red}256}&{\color{red}256}&{\color{red}256}&{\color{red}256}&{\color{red}256}
&{\color{red}256}&{\color{red}256}&{\color{red}256}&{\color{red}256}&{\color{red}256}&{\color{red}256}&{\color{red}256}&{\color{red}256}
&{\color{red}256}&{\color{red}256}&{\color{red}256}&{\color{red}256}&{\color{red}256}&{\color{red}256}&{\color{red}256}&{\color{red}256}\\
\hline
$h$ &129 &130 &131& 132& 133& 134& 135& 136& 137& 138& 139& 140 &141& 142& 143& 144 &145& 146& 147 &148& 149& 150& 151& 152& 153& 154& 155 &156& 157& 158& 159 &160\\
\hline
$\xi_h(Q_{9,2})$&390&	394&	398&	400&	404&	406&	408&	408	&412&	414&	416	&416&	418&	418&	418&	416	&420&	422&	424&	424&	426&	426&	426&	424&	426&	426&	426&	424&	424&	422&	420	&416\\
$\lambda_h(Q_{9,2})$&{\color{red}256}&{\color{red}256}&{\color{red}256}&{\color{red}256}&{\color{red}256}&{\color{red}256}&{\color{red}256}&{\color{red}256}
&{\color{red}256}&{\color{red}256}&{\color{red}256}&{\color{red}256}&{\color{red}256}&{\color{red}256}&{\color{red}256}&{\color{red}256}
&{\color{red}256}&{\color{red}256}&{\color{red}256}&{\color{red}256}&{\color{red}256}&{\color{red}256}&{\color{red}256}&{\color{red}256}
&{\color{red}256}&{\color{red}256}&{\color{red}256}&{\color{red}256}&{\color{red}256}&{\color{red}256}&{\color{red}256}&{\color{red}256}\\
\hline
$h$ &161	&162&	163	&164&	165&	166&	167	&168&	169&	170	&171&	172&	173&	174	&175&	176&	177&	178&	179	&180&	181&	182&	183&	184&	185&	186	&187&	188&	189&	190&	191	&192\\
\hline
$\xi_h(Q_{9,2})$&420	&422&	424&	424&	426&	426&	426&	424&	426	&426&	426&	424	&424&	422	&420&	416&	418	&418&	418&	416	&416&	414&	412&	408&	408&	406&	404&	400&	398&	394&	390&	384\\
$\lambda_h(Q_{9,2})$&{\color{red}256}&{\color{red}256}&{\color{red}256}&{\color{red}256}&{\color{red}256}&{\color{red}256}&{\color{red}256}&{\color{red}256}
&{\color{red}256}&{\color{red}256}&{\color{red}256}&{\color{red}256}&{\color{red}256}&{\color{red}256}&{\color{red}256}&{\color{red}256}
&{\color{red}256}&{\color{red}256}&{\color{red}256}&{\color{red}256}&{\color{red}256}&{\color{red}256}&{\color{red}256}&{\color{red}256}
&{\color{red}256}&{\color{red}256}&{\color{red}256}&{\color{red}256}&{\color{red}256}&{\color{red}256}&{\color{red}256}&{\color{red}256}\\
\hline
$h$&193	&194&	195&	196&	197	&198	&199	&200	&201&	202	&203&	204&	205	&206&	207&	208&	209	&210	&211&	212	&213&	214&	215&	216&	217&	218&	219&	220&	221&	222&	223&	224\\
\hline
$\xi_h(Q_{9,2})$&388	&390	&392&	392	&394&	394&	394&	392	&394&	394&	394	&392&	392&	390&	388&	384&	386	&386&	386	&384&	384&	382	&380&	376&	376&	374	&372&	368	&366&	362&	358&	352\\
$\lambda_h(Q_{9,2})$&{\color{red}256}&{\color{red}256}&{\color{red}256}&{\color{red}256}&{\color{red}256}&{\color{red}256}&{\color{red}256}&{\color{red}256}
&{\color{red}256}&{\color{red}256}&{\color{red}256}&{\color{red}256}&{\color{red}256}&{\color{red}256}&{\color{red}256}&{\color{red}256}
&{\color{red}256}&{\color{red}256}&{\color{red}256}&{\color{red}256}&{\color{red}256}&{\color{red}256}&{\color{red}256}&{\color{red}256}
&{\color{red}256}&{\color{red}256}&{\color{red}256}&{\color{red}256}&{\color{red}256}&{\color{red}256}&{\color{red}256}&{\color{red}256}\\
$h$ &225	&226	&227	&228	&229&	230&	231&	232&	233	&234&	235&	236&	237&	238&	239&	240&	241&	242	&243&	244&	245&	246&	247	&248&	249&	250&	251&	252&	253	&254&	255	&256\\
$\xi_h(Q_{9,2})$&354&	354&	354&	352	&352&	350&	348&	344	&344	&342&	340&	336&	334&	330&	326&	320	&320&	318&	316	&312&	310	&306&	302	&296&	294	&290	&286&	280&	276	&270	&264&	256\\
$\lambda_h(Q_{9,2})$&{\color{red}256}&{\color{red}256}&{\color{red}256}&{\color{red}256}&{\color{red}256}&{\color{red}256}&{\color{red}256}&{\color{red}256}
&{\color{red}256}&{\color{red}256}&{\color{red}256}&{\color{red}256}&{\color{red}256}&{\color{red}256}&{\color{red}256}&{\color{red}256}
&{\color{red}256}&{\color{red}256}&{\color{red}256}&{\color{red}256}&{\color{red}256}&{\color{red}256}&{\color{red}256}&{\color{red}256}
&{\color{red}256}&{\color{red}256}&{\color{red}256}&{\color{red}256}&{\color{red}256}&{\color{red}256}&{\color{red}256}&{\color{red}256}\\
\hline
\hline
\end{tabular}}
\end{table}

\begin{figure}[htbp]
\centering
\begin{minipage}[t]{0.45\textwidth}
\vspace{4pt}
\centerline{
\includegraphics[width=7cm,height=4.5cm]{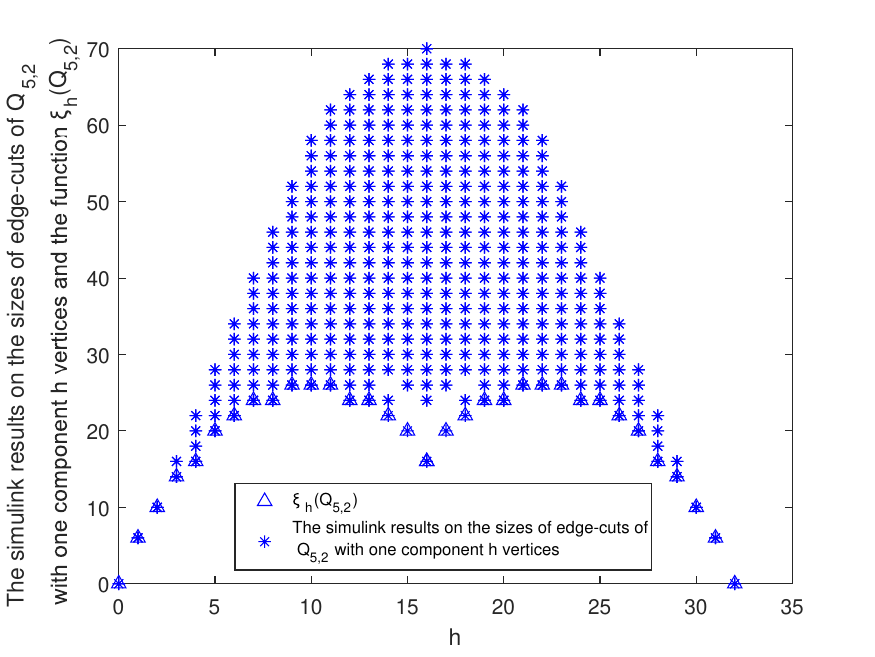}}
\end{minipage}
\begin{minipage}[t]{0.45\textwidth}
\vspace{4pt}
\centerline{
\includegraphics[width=7cm,height=4.5cm]{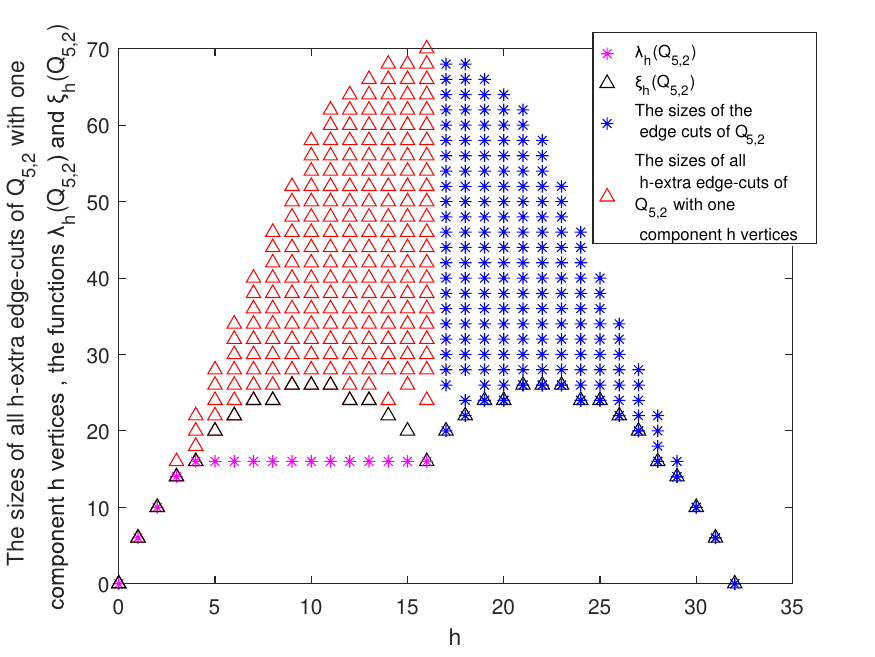}}
\end{minipage}
\caption{The comparison of the sizes of $h$-extra edge-cuts in $Q_{5,2}$ between the simulation and our results.}
\label{fig5}
\end{figure}

\begin{figure}[htbp]
\centering
\begin{minipage}[t]{0.32\textwidth}
\vspace{4pt}
\centerline{
\includegraphics[width=5cm,height=3.2cm]{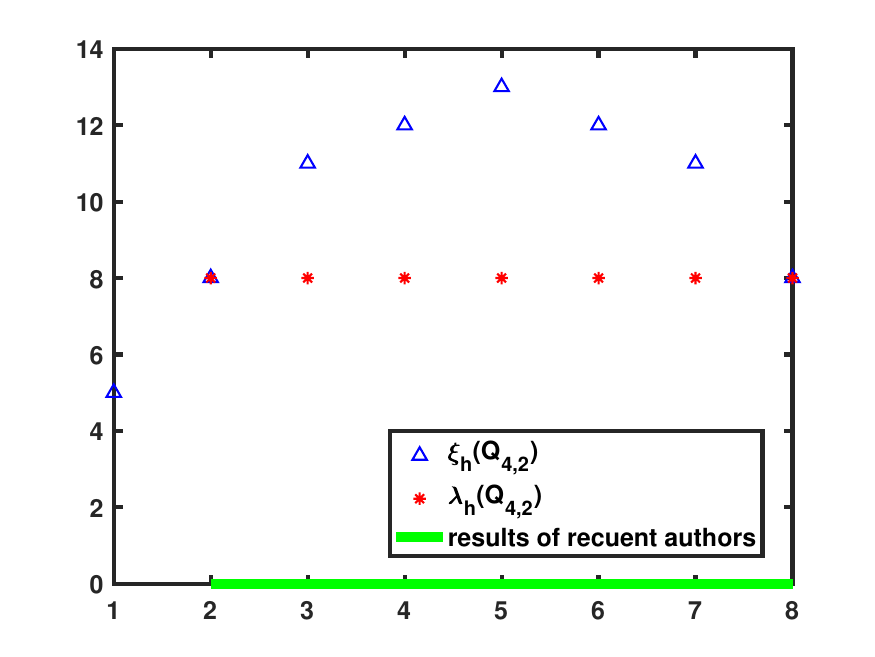}}
\end{minipage}
\begin{minipage}[t]{0.32\textwidth}
\vspace{4pt}
\centerline{
\includegraphics[width=5cm,height=3.2cm]{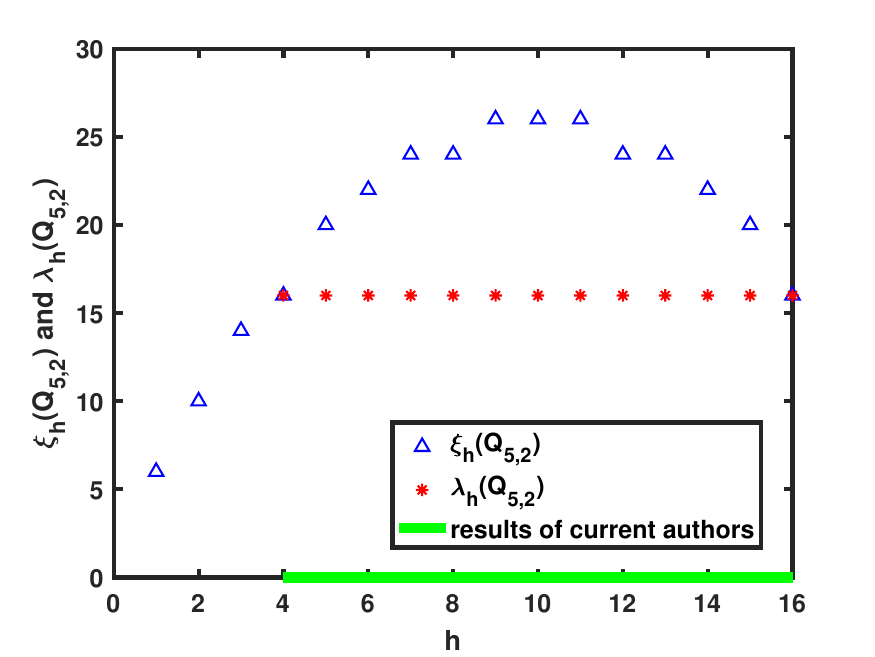}}
\end{minipage}
\begin{minipage}[t]{0.32\textwidth}
\vspace{4pt}
\centerline{
\includegraphics[width=5cm,height=3.2cm]{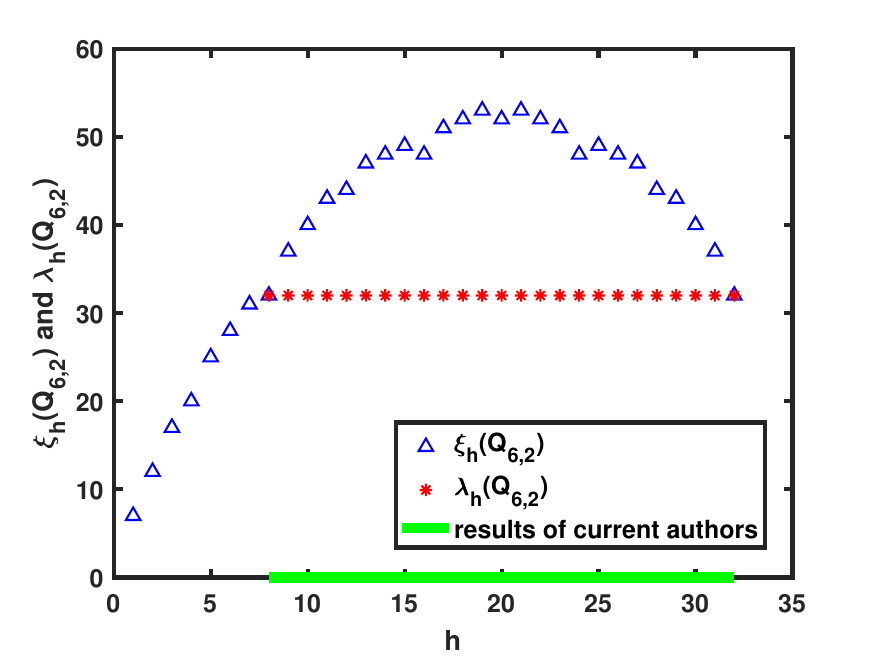}}
\end{minipage}
\begin{minipage}[t]{0.32\textwidth}
\vspace{4pt}
\centerline{
\includegraphics[width=5cm,height=3.2cm]{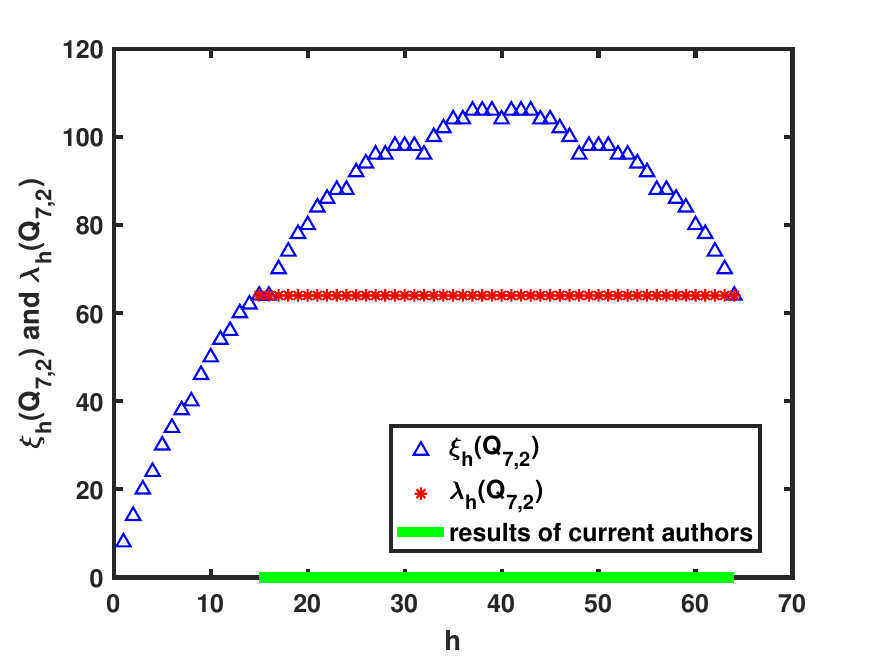}}
\end{minipage}
\begin{minipage}[t]{0.32\textwidth}
\vspace{4pt}
\centerline{
\includegraphics[width=5cm,height=3.2cm]{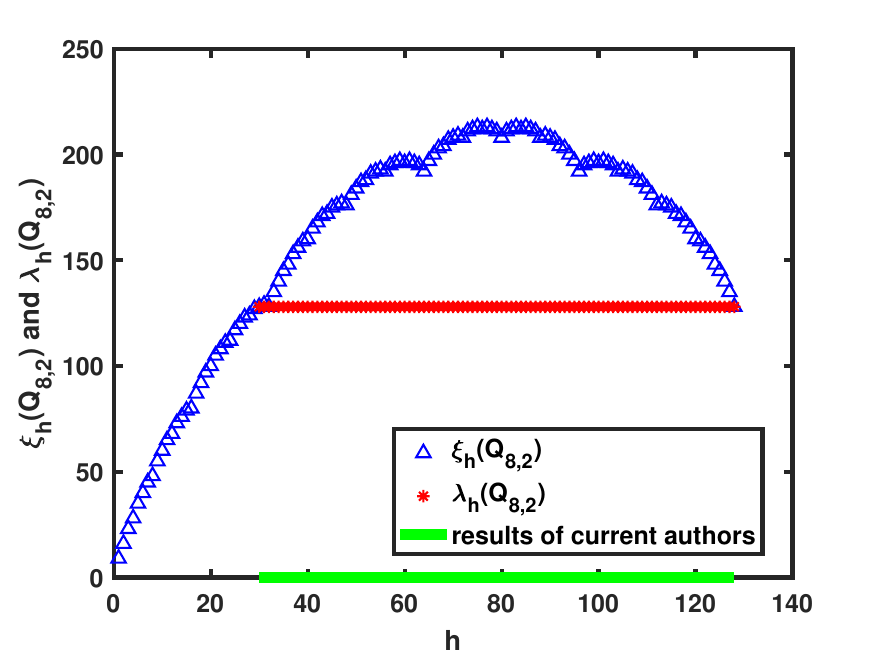}}
\end{minipage}
\begin{minipage}[t]{0.32\textwidth}
\vspace{4pt}
\centerline{
\includegraphics[width=5cm,height=3.2cm]{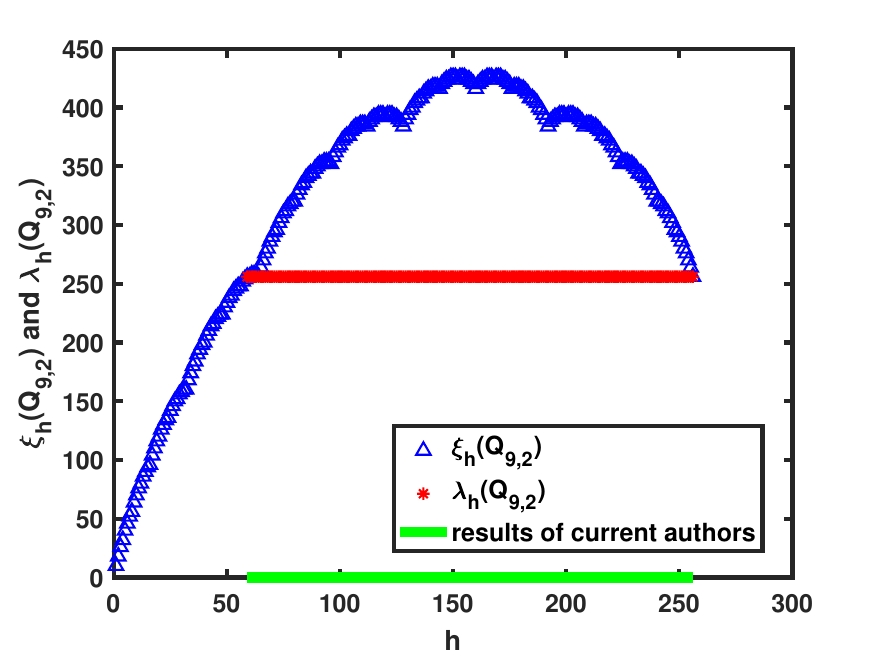}}
\end{minipage}
\begin{minipage}[t]{0.32\textwidth}
\vspace{4pt}
\centerline{
\includegraphics[width=5.3cm,height=3.2cm]{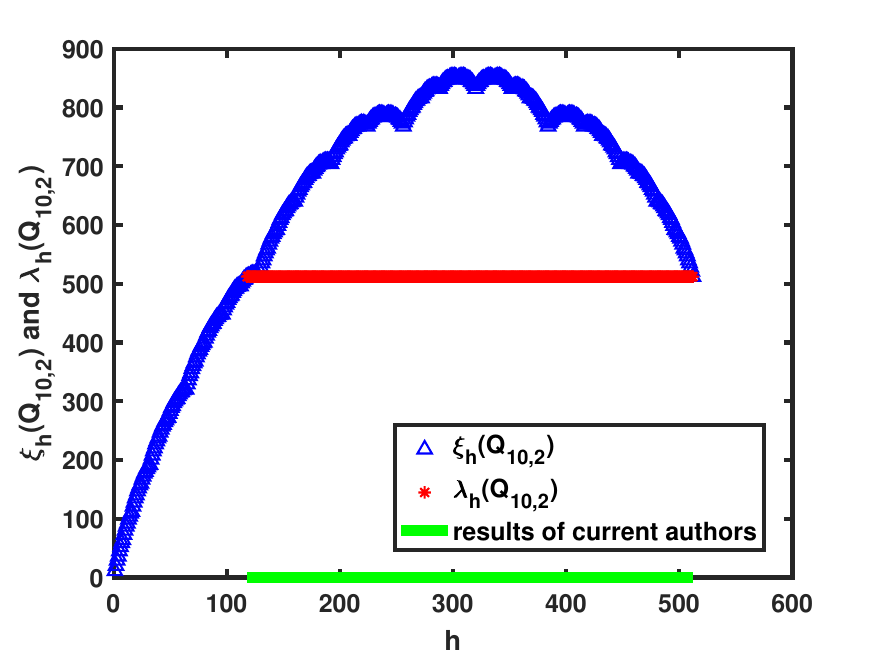}}
\end{minipage}
\begin{minipage}[t]{0.32\textwidth}
\vspace{4pt}
\centerline{
\includegraphics[width=5cm,height=3.2cm]{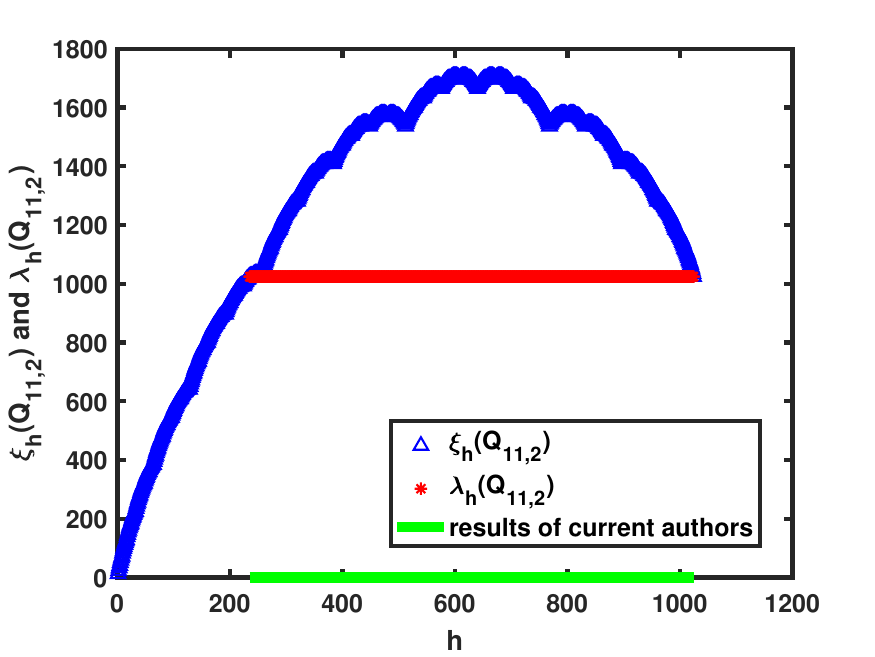}}
\end{minipage}
\begin{minipage}[t]{0.32\textwidth}
\vspace{4pt}
\centerline{
\includegraphics[width=5cm,height=3.2cm]{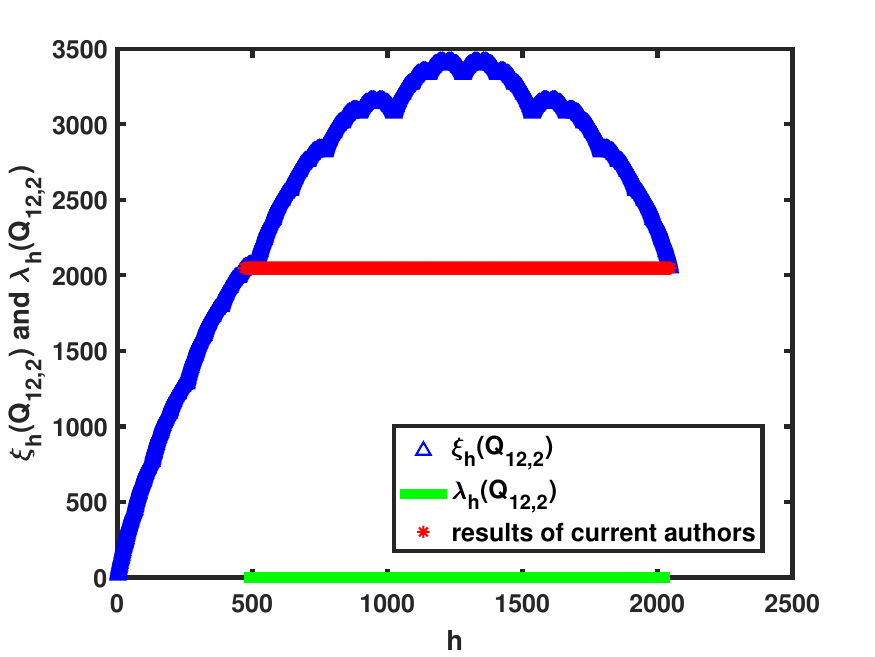}}
\end{minipage}
\caption{The scatter plot of $\lambda_{h}(Q_{n,2})$ and $\xi_{h}(Q_{n,2})$ for case $4\leq n\leq 12$.}
\label{fig6}
\end{figure}

We make a simulation of computing the possible sizes of the edge-cuts
of $Q_{n,2}$ for $n=5$. In the first figure of Fig.~\ref{fig6}, the simulink results for the edge-cuts $[X, \overline{X}]_{Q_{5,2}}$ of $Q_{5,2}$ with one component having $h$ vertices and the function $\xi_h\left(Q_{5,2}\right)$ for $1 \leq h \leq 2^{5}$  are displayed. The possible sizes of the edge-cuts $[X, \overline{X}]_{Q_{5,2}}$ of $Q_{5,2}$  for $h=6$ are $22, 24, 26, 28, 30, 32,$ and $34$ according to the distribution of the first figure of Fig.~\ref{fig6}. The lower bound for these values is $\xi_6\left(Q_{5,2}\right)=22$. The scatter plot of the function $\xi_h\left(Q_{5,2}\right.)$  (depicted in blue `` $\triangle$ " scatters) is symmetric with regard to $h=2^{4}$ because $|[X, \overline{X}]_{Q_{5,2}}|=|[\overline{X}, X]_{Q_{5,2}}|$. In general, the theoretical function $\xi_h\left(Q_{5,2}\right)$ lower bounds our simulation on the sizes of all the edge-cuts $[X, \overline{X}]_{Q_{5,2}}$ with one component containing $h$ vertices for each $0 \leq h \leq2^{4}$.

The sizes of the $h$-extra edge-cuts of $Q_{5,2}, \xi_h\left(Q_{5,2}\right)$ and $\lambda_h\left(Q_{5,2}\right)$ for $h \leq 2^{4}$ are shown in the second figure of Fig.~\ref{fig6}. According to Lemma~\ref{s5},
$$\lambda_h\left(Q_{5,2}\right)=\min \{\xi_m(Q_{5,2}): 1 \leq h \leq m \leq 2^{4}\}.$$
We also find that the $h$-extra edge-connectivity of the $(5,2)$-enhanced hypercube $Q_{5,2}$ presents a concentration phenomenon on the value 16 for $4 \leq h \leq 16$. The results of the simulation are in consistent with those of theoretical analysis.

Unexpectedly, we find that the $h$-extra edge-connectivity of $Q_{n,2}$ exhibits a concentration phenomenon for some exponentially large $h$ on the interval of $\left\lceil\frac{11\times2^{n-1}}{48}\right\rceil\leq h \leq 2^{n-1}$.
Let
$$g(n)=\left|\{h~:~\lambda_h(Q_{n,2})=2^{n-1},  \quad h\leq 2^{n-1}\}\right|.$$
So $g(n)= 2^{n-1}-\lceil\frac{11\times2^{n-1}}{48}\rceil+1$. Due to $|V(Q_{n,2})|=2^{n}, \lambda_h(Q_{n,2})$ is well-defined for any integer $1\leq h\leq 2^{n-1}$. Let $R(n)=\frac{g(n)}{2^{n-1}}$ be the percentage  of the number of integer $h$ with the corresponding $\lambda_h(Q_{n,2})=\xi_h(Q_{n,2})=2^{n-1}$ for $1\leq h\leq 2^{n-1}$.  For the sake of simplicity, Table~\ref{tab5} lists some exact values of the function $R(n)$ for $4\leq n\leq31$. Then $R(n)=\frac{2^{n-1}-\lceil\frac{11\times2^{n-1}}{48}\rceil+1}{2^{n-1}},$  ${\lim\limits_{n \rightarrow \infty}}R(n)=\frac{37}{48}$. The function
$R(n)$ is shown in Fig.~\ref{fig7}. The ratio of the length of the $\lambda_h(Q_{n,2})= 2^{n-1}$ subinterval to the $0\leq h\leq 2^{n-1}$ interval gets infinitely closer to $\frac{37}{48}$ as $n$ grows.  For $n \rightarrow\infty, 77.083\%$ of $\lambda_h(Q_{n,2})$ is $2^{n-1},$ which shows the concentration phenomenon of $\lambda_h(Q_{n,2})$. Furthermore, similar results can be obtained, even if the lower bound of $h$ is not $\left\lceil\frac{11\times2^{n-1}}{48}\right\rceil$ for $4\leq n\leq 8$.\\

\makeatletter\def\@captype{table}\makeatother
\begin{minipage}{.5\textwidth}
\centering
\caption{The values $g(n)$ and $R(n)$ for $4 \leq n \leq 31$.}\label{tab5}
\resizebox{\textwidth}{!}
{
\begin{tabular}{llllllllllllllllll}
\hline
$n$	                &$g(n)$       &$R$ & $n$  &$g(n)$       &$R$\\
\hline
$4$                           &$7$          &$87.5\%$       &$18$                        &$101035$         &$77.083587\%$  \\
$5$                            &$13$          &$81.25\%$   &$19$                    &$202070$        &$77.083587\%$   \\
$6$                            &$25$         &$78.125\%$   &$20$                         &$404139$   &$77.083396\%$\\
$7$                         &$50$         &$78.125\%$     &$21$                     &$808278$ &$77.083396\%$\\
$8$                          &$99$         &$77.34375\%$  &$22$            &$1616555$  &$77.083349\%$\\
$9$      &$198$  &$77.34375\%$   &$23$        &$3233110$ &$77.083349\%$\\
$10$      &$395$  &$77.148437\%$   &$24$        &$6466219$ &$77.083337\%$\\
$11$      &$790$  &$77.148437\%$   &$25$        &$12932438$ &$77.083337\%$\\
$12$      &$1579$  &$77.099609\%$   &$26$        &$25864875$ &$77.083334\%$\\
$13$      &$3158$  &$77.099609\%$   &$27$        &$51729750$ &$77.083334\%$\\
$14$      &$6315$  &$77.087402\%$   &$28$        &$103459499$ &$77.083333\%$\\
$15$      &$12630$  &$77.087402\%$   &$29$        &$206918998$ &$77.083333\%$\\
$16$      &$25259$  &$77.084350\%$   &$30$        &$413837995$ &$77.083333\%$\\
$17$                &$50518$        &$77.084350\%$   &$31$        &$827675990$ &$77.083333\%$\\
\hline
\end{tabular}}
\end{minipage}
\makeatletter\def\@captype{figure}\makeatother
\begin{minipage}{0.45\textwidth}
\centering
\includegraphics[width=6cm,height=5cm]{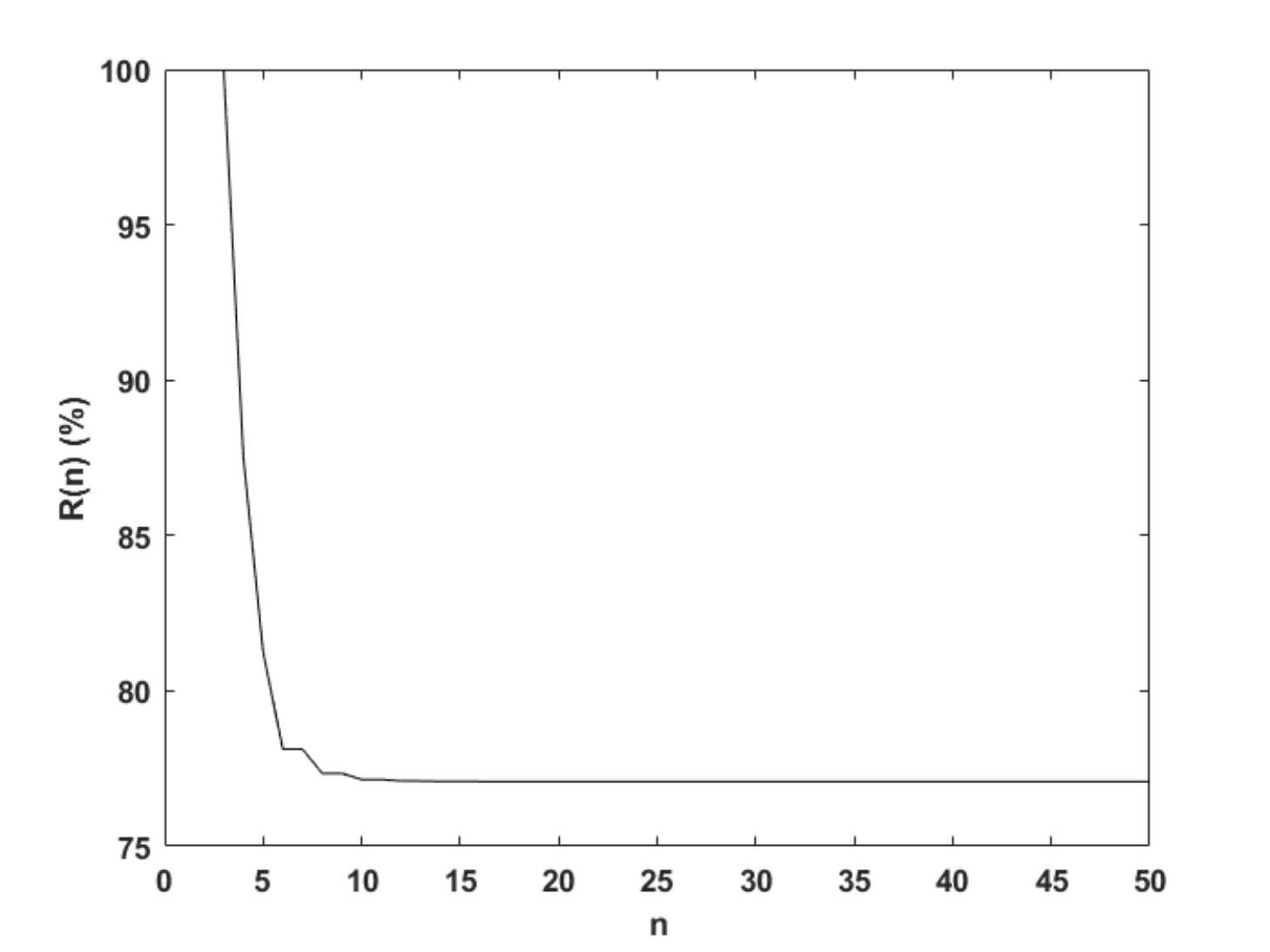}
\caption{The plot of the function $R(n)$.}\label{fig7}
\end{minipage}\\

\section{Conclusion}
It is well known that the $h$-extra edge-connectivity is an important indicator for measuring the fault tolerance and reliability of interconnection networks. This paper shows that the $h$-extra
edge-connectivity of $(n,2)$-enhanced hypercubes $Q_{n,2}$ presents a concentration phenomenon in the subinterval
$$\textstyle{\left\lceil\frac{11\times2^{n-1}}{48}\right\rceil\leq h \leq 2^{n-1}}$$
 for $n\geq9$. For approximately $77.083\%$ values of $h \leq 2^{n-1}$, the minimum number of link malfunctions is $2^{n-1}$, and these link malfunctions
disconnect $(n,2)$-enhanced hypercube $Q_{n,2}$ and keep each resulting connected subnetworks with at least $h$ processors. Our results provide a more accurate measure for evaluating the reliability and availability of large-scale $Q_{n,2}$ networks. In order to completely solve the $h$-extra edge-connectivity of the remaining intervals, we will further investigate an algorithm to determine the exact value and the optimality of the $h$-extra edge-connectivity of $Q_{n,2}$ for each integer $h\leq 2^{n-1}$ in the future. Additionally, for the general network $Q_{n,k}$, we propose to design an algorithm to determine the exact value and the optimality of $\lambda_h(Q_{n,k})$.\\

\noindent \textbf{Acknowledgements}\vspace{1em}
\\ The authors would like to thank referees and editors for their help. Their valuable comments and suggestions help to improve the quality of this paper. \\

%


\bibliography{sn-bibliography}

\end{document}